\theoremstyle{plain}
\newtheorem{lemma}{Lemma}
\newtheorem{theorem}{Theorem}
\newtheorem{definition}{Definition}
\newtheorem{remark}{Remark}
\newtheorem{lemma*}{Lemma}
\newcommand{\inter}{{\mbox{Int\ }}}
\newcommand{\vGamma}{{{\mathbb V}(\Gamma)}}
\newcommand{\eGamma}{{{\mathbb E}(\Gamma)}}
\newcommand{\eTwoGamma}{{{\mathbb E}^{(2)}(\Gamma)}}
\newcommand{\unit}{{\mathbf 1}}
\newcommand{\SByT}{{S \slash T}}
\newcommand{\TByS}{{T \slash S}}
\def\qed{\hfill$\square$\par\rm}
\begin{document}

\markboth
{Vladimir Tarkaev}
{A prime decomposition for string links in a thickened surface}

%%%%%%%%%%%%%%%%%%%%% Publisher's Area please ignore %%%%%%%%%%%%%%
%\catchline{}{}{}{}{}
%%%%%%%%%%%%%%%%%%%%%%%%%%%%%%%%%%%%%%%%%%%%%%%%%%%%%%%%%%%%%%%%%%%

\title{
A prime decomposition theorem for string links in a thickened surface
}

\author{
Vladimir Tarkaev
\footnote{
Steklov Mathematical Institute of Russian Academy of Sciences, Moscow, Russia, 
%8 Gubkina St., Moscow 119991, Russia
Chelyabinsk State University, Chelyabinsk, Russia,
Krasovskii Institute of Mathematics and Mechanics, Ural Branch of the Russian Academy of Sciences, Yekaterinburg, Russia,
%St. Petersburg State University, Saint Petersburg, Russia\\
trk@csu.ru
}
}

\maketitle

\begin{abstract}
We prove a prime decomposition theorem for string links in a thickened surface.
Namely, we prove that any non-braid string link
$\ell \subset \Sigma \times I$,
where $\Sigma$ is a compact orientable (not necessarily closed) surface other than $S^2$,
can be written in the form
$\ell =\ell_1 \# \ldots \# \ell_m$,
where $\ell_j,j=1,\ldots,m,$ is prime string link defined up to braid equivalence,
and the decomposition is unique up to possibly permuting the order of factors in its right-hand side.
\end{abstract}

\begin{comment}
\keywords{
string link;
prime decomposition;
string link monoid;
thickened surface.
}

\ccode{Mathematics Subject Classification 2000: 57M25, 57M27}
\end{comment}

\section*{Introduction}
\label{sec:Introduction}

The concept of string links (including the one on surfaces)
was proposed by Milnor~\cite{Milnor54}
as a generalization of braids defined by Artin~\cite{artin47}.
String links are defined as braids with the monotonicity requirement relaxed,
i.e., they are a proper embedding of the disjoint union of $n \geq 1$ segments in the cylinder of the form
(compact orientable surface)$\times I$
so that the image of each segment has one endpoint on the bottom base of the cylinder and the other endpoint on its top base.
The subject appears and is still actively studied
in connection with the homotopy braid group,
that is the group formed by string links regarded up to so called link homotopy
(see, e.g.,~\cite{Milnor54},~\cite{HabeggerLin90}).
Another subject of investigation in this area is the monoid of string links
in the cylinder $D^2 \times I$
(see, e.g.,~\cite{Krebes},~\cite{MeilhanYasuhara14}),
in particular, it is closely related to Milnor invariants of classical links
(\cite{HabeggerLin90},~\cite{HabeggerLin98}).
The notion of string links in $D^2 \times I$ can be generalized in different directions.
For example, such string links
are just the same as classical long links,
hence it is natural to extend them to  long virtual links
(see, e.g.,~\cite{Cheng17},~\cite{Gaudreau20}).
Another natural generalization is string links in a thickened surface
(see, e.g.,~\cite{DuzhinKarev07},~\cite{HabeggerMeilhan08}),
and the latter kind of string links is what we consider in the present paper.

Our goal is to prove a prime decomposition theorem for string links on $n \geq 1$ strands
in $\Sigma \times I$, where $\Sigma$ is a compact orientable surface with (maybe) non-empty boundary.
Namely, in Theorem~\ref{theorem:Main} we prove that
a non-braid string link $\ell \subset \Sigma \times I$ can be written in the form
$\ell =\ell_1 \# \ldots \# \ell_m$,
where $\ell_j,j=1,\ldots,m,$ is prime string link defined up to braid equivalence,
and the decomposition is unique up to possibly permuting the order of factors in its right-hand side
(precise  definitions of all of these terms are given below in Section~\ref{sec:Preliminaries}).
This theorem is a generalization of an analogous result proved in~\cite{BlairBurkeKoytcheff}
for the particular case when $\Sigma$ is $D^2$ and $n=2$.
However, it is necessary to note that the authors in the aforementioned paper
described the center of the  monoid of $2$-string links in $D^2 \times I$.
With this result they give a precise answer to the question
on which factors in the decomposition can be permuted.
Also, we would like to mention a prime decomposition theorem for
long virtual knots~\cite{Chrisman13}.
This theorem covers the case of $1$-strand string links in (annulus)$\times I$.
The case is well studied because it is one of the possible interpretations
of knotoids proposed by Turaev~\cite{Turaev2012}.
In particular, Turaev in this work describes the center of the semigroup of knotoids.
In the general case, the structure of the center of the string link monoid remains unknown.

The paper is organized as follows.
Section~\ref{sec:Preliminaries}
gives all the necessary definitions and using notation.
In Section~\ref{sec:MainTheorem}
we formulate Theorem~\ref{theorem:Main},
which is our main result,
and then give a few comments concerning the theorem.
In Section~\ref{sec:DiamondLemma}
we briefly review the Diamond lemma,
which plays an essential role in our proof of Theorem~\ref{theorem:Main}.
Section~\ref{sec:ProofOfMainTheorem}
gives the proof of Theorem~\ref{theorem:Main}.

\section{Preliminaries}
\label{sec:Preliminaries}

\subsection{String links}
\label{sec:StringLinks}

Let $\Sigma$ be a compact connected orientable (not necessarily closed) surface other than $S^2$.
We denote by $M$ the manifold of the form
\[
M =\Sigma \times I,
\] 
where $I =[-1,1]$.
If $\partial \Sigma \not=\emptyset$ the structure of the direct product in $M$
is assumed to be fixed.
It is convenient to think that the surfaces $\Sigma \times \{ y \}, y \in I,$ lie in the cylinder $M$ horizontally,
while the segment $I$ determines a vertical coordinate increasing from bottom to top.

We denote by
\[
\partial_{-} =\Sigma \times \{ -1 \}, \partial_{+} =\Sigma \times \{ 1 \},
\]
i.e., $\partial_{-}$ and $\partial_{+}$ are the bottom and top bases
of the cylinder $M$, respectively.

Let $X \subset \inter \Sigma$
be a fixed set consisting of $n \geq 1$ pairwise distinct points lying in the interior of $\Sigma$.

\begin{definition}
[String link]
\label{def:StringLink}
By an \emph{$n$-component string link} ($n$-string link for short)
we understand a smooth embedding 
$I_1 \amalg \ldots \amalg I_n \to M \setminus (\partial \Sigma \times I)$
of $n$ pairwise disjoint copies of a segment into $M \setminus (\partial \Sigma \times [-1, 1])$
so that for any $j=1,\ldots,n,$ the image of $I_j$
has one endpoint in $X \times \{ -1 \}$ and the other endpoint in $X \times \{ 1 \}$.
The image of $I_j$ under the embedding is called the \emph{$j$th component} (or $j$th strand) of the string link.
\end{definition}
Some authors (e.g., see~\cite{BlairBurkeKoytcheff}) consider so-called pure string links,
those are string links with additional property:
for any $j=1,\ldots,n,$ the endpoints of the $j$th component
are $x_j \times \{ -1 \}$ and $x_j \times \{ 1 \}$ for some $x_j \in X$.
We do not require the property.

We will say that a string link is \emph{classical}
if $M =D^2 \times I$, where $D^2$ denotes the disk.

\begin{definition}
[String link equivalence]
\label{def:Equivalence}
String links $\ell_1,\ell_2 \subset M$ are \emph{equivalent}
if there is an isotopy fixing $\partial M$ that takes $\ell_1$ to $\ell_2$.
In this case,  (by abuse of notation) we will write $\ell_1 =\ell_2$.
\end{definition}

Let $p_I: M \to I$ be the projection map to the second coordinate in the direct product $\Sigma \times I$.
A string link $\ell$ is called a \emph{braid}
if there is a string link $\ell'$ equivalent to $\ell$ so that
the restriction of $p_I$ to every component  of $\ell'$ is a bijection.
This kind of braid is known in literature as surface braid.
Surface braids, like classical ones, form a group;
see, for example,
\cite{Scott70}, \cite{Gonzalez2001}, \cite{Bellingeri2004},
where the subject is studied from the algebraic point of view.
We will denote by $\unit$ the braid
that is equivalent to the string link
whose components are vertical segments $x_j \times I$.

The set of $n$-string links has a monoid structure,
given by the operation called the \emph{stacking product}.
The operation consists in identifying the top base of the cylinder containing $\ell_1$
with the bottom base of the cylinder containing $\ell_2$
and the vertical compression of the resulting cylinder to make its height standard.
 The stacking product operation is denoted by  $\ell_1 \# \ell_2$.

\begin{definition}
[Braid equivalence of string links]
\label{def:BraidEquivalence}
String links $\ell_1,\ell_2 \subset \Sigma \times I$ are \emph{braid equivalent}
if there is an isotopy fixing $\partial \Sigma \times I$
that takes $\ell_1$ to $\ell_2$.
In this case, we will write $\ell_1 \simeq \ell_2$.
\end{definition}
Since $B \simeq \unit$ for any braid $B$
we can rewrite the above definition in the form:
\[
\ell_1 \simeq \ell_2 \text{ if and only if } \ell_1 =B_1 \# \ell_2 \# B_2
\] 
for some braids $B_1,B_2$.
In the case of classical string links, the notion of braid equivalence via stacking products with braids
was proposed and studied in~\cite{BlairBurkeKoytcheff}.
In particular, the paper gives proof of the aforementioned fact.
The latter proof can be extended to our situation with minor obvious changes.

Note that in the case of classical $2$-component string links
$\ell_1 \# \ell_2 \simeq \ell'_1 \# \ell'_2$ if $\ell'_j \simeq \ell_j,j=1,2$.
However, in general, this fact is not true.

\subsection{Free string links}
\label{sec:FreeStringLink}

We have two different equivalence relations on the set of string links:
the first equivalence relation requires that the endpoints of strands are fixed
(see Definition~\ref{def:Equivalence})
while the second one allows us to move these endpoints
(see Definition~\ref{def:BraidEquivalence}).
The fixing of endpoints is needed for making composite string links via   the stacking product,
however, it is not convenient for the opposite process of decomposing a string link.
This is because, in the latter case, in general, the location of the endpoints of the parts obtained as a result of the decomposing process
is defined up to isotopy only.
Since in the present paper we focus on the task of decomposing a string link,
we will mainly use the braid equivalence.
Moreover, within the proof of the prime decomposition theorem (Theorem~\ref{theorem:Main})
in Section~\ref{sec:ProofOfMainTheorem}
we use the following weakened definition of a string link:

\begin{definition}
[Free string link]
\label{def:FreeStringLink}
By a \emph{free string link} we understand
a smooth embedding 
$I_1 \amalg \ldots \amalg I_n \to M \setminus (\partial \Sigma \times I)$
of $n$ pairwise disjoint copies of a segment into $M \setminus (\partial \Sigma \times I)$
so that for any $j=1,\ldots,n,$ the image of $I_j$
has one endpoint in $\partial_{-}$ and the other endpoint in $\partial_{+}$.
\end{definition}
Braid equivalence can be extended to free string links in the obvious way.

The decomposition process of a string link consists in consequently performing
cutting operations (cutting is defined in the next section)
and free string links naturally appear as a result of cutting.

\subsection{Cutting surfaces}
\label{sec:CuttingSurface}

\begin{definition}
[Cutting surface]
\label{def:CuttingSurface}
A properly embedded  surface $F \subset \Sigma \times I$
is called a \emph{cutting surface} for a string link $\ell \subset M$
if the following conditions hold:
\begin{itemize}
\item $F$ is isotopic to a (horizontal) fiber of the form $\Sigma \times \{ y \},y \in I$,
\item $F \cap \partial_{\pm} =\emptyset$,
\item $F$ intersects each strand of $\ell$ transversally in exactly one point.
\end{itemize}
\end{definition}
In particular, if $\partial \Sigma \not=\emptyset$
then $\partial F \subset \inter (\partial \Sigma \times I)$.
Note that a cutting surface is necessarily incompressible in $M$.

\begin{definition}
[$\ell$-admissible isotopy]
\label{def:AdmissibleIsotopy}
Let $F,F'$ be cutting surfaces for a string link $\ell$.
An isotopy taking $F$ to $F'$ is called \emph{$\ell$-admissible}
if at each moment of the transformation the surface remains a cutting surface for $\ell$.
\end{definition}

\begin{definition}
[Equivalent cutting surfaces]
\label{def:Equivalent CuttingSurfaces}
Cutting surfaces $F$ and $F'$ for a string link $\ell$ are \emph{equivalent}
if there is an $\ell$-admissible isotopy taking $F$ to $F'$.
In this case, we will write $F \simeq F'$.
\end{definition}

We denote by  the same symbol $\simeq$ both the braid equivalence of string links and  the equivalence of cutting surfaces.
This should not lead to confusion
because, in all cases, the type of object in question is clear from the context.

It is well-known that there is a correspondence between classical braids and isotopies of a disk.
An analogous correspondence for the same reasons takes place between surface braids and isotopies of the corresponding surface.
In our context, this implies that
disjoint cutting surfaces $F,F'$ for a string link $\ell$ are equivalent if and only if the free string link $\ell'$
that is the part of $\ell$ lying between $F$ and $F'$
is a braid up to braid equivalence.

A cutting surface $F$ cuts $M$ into two parts
each of which is homeomorphic to $\Sigma \times I$.
These parts will be denoted by $M_F^{+}$ and $M_F^{-}$,
where $\partial_{+} \subset M_F^{+}$ and $\partial_{-} \subset M_F^{-}$.
We define  both of these sets to be closed,
i.e., $M_F^{+} \cap M_F^{-} =F$.

A cutting surface $F$ cuts the string link $\ell$ into two free string links:
$\ell^{\pm} =\ell \cap M_F^{\pm}$.

\begin{definition}
[Trivial cutting surface]
\label{def:TrivialCuttingSurface}
A cutting surface $F$ for the string link $\ell$ is called \emph{trivial}
if at least one of $\ell^{+}$ and $\ell^{-}$
is a braid up to braid equivalence.
\end{definition}
Since any braid is braid equivalent to $\unit$
we can say that
a cutting surface is trivial if and only if
at least one of the parts into which it cuts the corresponding string link
is braid equivalent to $\unit$.

\begin{definition}
[Prime string link]
\label{def:Prime}
A non-braid string link is called \emph{prime}
if it has no non-trivial cutting surface.
\end{definition}
We do not consider braids as prime string links
although  they do not admit non-trivial cutting surfaces
(we discuss the fact below after Lemma~\ref{lemma:FC1}).

\subsection{Special pair of cutting surfaces}
\label{sec:SpecialPair}

Below in the proof of Theorem~\ref{theorem:Main}
we deal with pairs of cutting surfaces.
The proof becomes simpler if we consider
not arbitrary pairs but only pairs that are special in the following sense:

\begin{definition}
[Special pair of cutting surfaces]
\label{def:SpecialPair}
We will say that cutting surfaces $F_1,F_2$ form a \emph{special pair}
if the following conditions hold:
\begin{itemize}
\item if $\partial \Sigma \not=\emptyset$ then $\partial F_1 =\partial F_2$,
\item $\inter F_1$ and $\inter F_2$ are in general position
(i.e., they intersect transversely),
\item $F_1 \cap F_2$ either is empty 
or consists of a finite number of pairwise disjoint circles.
\end{itemize}
\end{definition}
Let  $F_1,F_2$ form a special pair of cutting surfaces.
Then the first condition implies that the equality $F_1 \cap F_2 =\emptyset$ can occur only if $\partial \Sigma =\emptyset$.
If $\partial \Sigma \not=\emptyset$ then $F_1$ and $F_2$ have coinciding non-empty boundary
consisting of pairwise disjoint circles.
The meaning of the last condition in the above definition is to exclude the situation when the intersection $F_1 \cap F_2$ contains arcs
with endpoints in $\partial F_j$.

The following lemma implies that within the decomposing process we can restrict ourselves
to cutting surfaces forming special pairs.

\begin{lemma}
\label{lemma:SpecialPair}
For any cutting surfaces $F_1,F_2$ for a string link $\ell$, there are
cutting surfaces $F'_1,F'_2$ so that
$F'_j \simeq F_j,j=1,2,$
and $F'_1,F'_2$ form a special pair.
\end{lemma}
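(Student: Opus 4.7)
The plan is to modify $F_1$ by a sequence of three $\ell$-admissible isotopies, each achieving one of the defining conditions of a special pair in turn. The key enabling observation is that $\ell$ lies entirely in $M \setminus (\partial \Sigma \times I)$, so there is a collar neighborhood $N$ of $\partial M$ in $M$ that is disjoint from $\ell$; consequently any isotopy of $F_1$ supported in $N$ is automatically $\ell$-admissible, since it cannot alter the transverse one-point intersection pattern of $F_1$ with the strands of $\ell$.

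For the first (boundary) condition, I would argue that $\partial F_1$ and $\partial F_2$ are ambient-isotopic inside $\partial M = \partial \Sigma \times I$. Each $F_j$ is by definition isotopic in $M$ to a horizontal fiber $\Sigma \times \{y_j\}$; restricting such an ambient isotopy of $M$ to $\partial M$ shows $\partial F_j$ is ambient-isotopic in $\partial M$ to the horizontal collection $\partial \Sigma \times \{y_j\}$, and transitivity then yields $\partial F_1 \simeq \partial F_2$ in $\partial M$. I would extend this ambient isotopy of $\partial M$ across the collar $N$, fading to the identity near the inner boundary of $N$, to obtain an $\ell$-admissible isotopy of $F_1$ whose endpoint (still called $F_1$) satisfies $\partial F_1 = \partial F_2$.

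For the second condition, standard relative transversality provides an arbitrarily small $\ell$-admissible isotopy of $F_1$ supported in $\inter M$ (and fixing $\partial F_1$) after which $\inter F_1$ and $\inter F_2$ are in general position in $\inter M$. Being a cutting surface is open under $C^\infty$-small perturbations because it depends only on transverse one-point intersections with $\ell$ and on the isotopy class of $F_1$ in $M$, so a sufficiently small choice preserves the cutting-surface property and does not disturb the matching boundaries.

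The main obstacle is the third condition: after the first two steps, $F_1 \cap F_2$ is a compact $1$-manifold whose closure may meet $\partial M$ transversely, producing intersection arcs with endpoints on $\partial F_1 = \partial F_2$. I would eliminate these by one further small perturbation of $F_1$, supported in $N$ and fixing $\partial F_1$, designed so that along $\partial F_1$ the inward normal of $F_1$ never coincides with that of $F_2$. Concretely, in local collar coordinates $(s, t, r)$ near a boundary point, with $r$ the distance from $\partial M$ and each $F_j$ locally a graph $t = t_j(s, r)$ with $t_1(s, 0) \equiv t_2(s, 0)$, I would arrange $\partial_r(t_1 - t_2)(s, 0) \neq 0$ for every $s$ along $\partial F_1$: this is a generic codimension-one condition. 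The implicit function theorem then forces $F_1 \cap F_2 \cap N$ to equal $\partial F_1$, so outside $N$ the intersection is a compact $1$-manifold with no boundary, i.e.\ a disjoint union of circles, and together with the boundary circles this yields the desired form of $F_1 \cap F_2$. All three perturbations are taken small enough to preserve the conditions achieved at earlier steps; the case $\partial \Sigma = \emptyset$ collapses to step 2 alone, since then $\partial F_j = \emptyset$ and no intersection arcs can arise.
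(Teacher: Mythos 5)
Your first two steps are sound and run parallel to the paper's idea: the key observation that any isotopy supported in a collar $N$ of $\partial M$ disjoint from $\ell$ is automatically $\ell$-admissible is exactly the mechanism the paper uses (there, an annular neighborhood $B'\times I$ of $\partial\Sigma\times I$ with $(B'\times I)\cap\ell=\emptyset$). The genuine gap is in your third step. The condition you want — $\partial_r(t_1-t_2)(s,0)\neq 0$ for \emph{every} $s$ on a boundary circle — is not a generic condition and in general cannot be achieved by a small perturbation fixing $\partial F_1$. The function $g(s)=\partial_r(t_1-t_2)(s,0)$ is a continuous function on a circle, so "nowhere zero" means "of constant sign", i.e.\ $F_1$ must leave the common boundary on the same side of $F_2$ all the way around. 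If after your steps 1--2 the function $g$ changes sign (which nothing in your construction rules out, since the boundary-matching isotopy was chosen with no control over normal derivatives), then any $C^1$-small perturbation fixing $\partial F_1$ still changes sign, hence still vanishes somewhere; what genericity actually gives is finitely many transverse zeros of $g$, and at each sign-changing zero an intersection arc with endpoint on $\partial F_1=\partial F_2$ emanates into the interior — precisely the configuration the third condition of the definition of a special pair is meant to exclude. So the step as justified would fail.

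The fix requires a deliberate (possibly $C^1$-large, though $C^0$-small) re-embedding of the collar of $F_1$ near its boundary rather than an appeal to genericity: inside $N$ you are free to re-route $F_1$ so that, just inside the common boundary, it lies strictly on one side of $F_2$ (constant sign of $g$), and this is $\ell$-admissible no matter how large the move is, since its support misses $\ell$. This is essentially what the paper does, in the opposite order: it first isotopes $F_2$ inside the collar so that on an inner annulus $\gamma''\times I$ the circle $F_2\cap(\gamma''\times I)$ lies strictly above $F_1\cap(\gamma''\times I)$, and only then superposes the boundary circles in the outermost part of the collar; with one surface approaching the common boundary consistently from one side, the intersection near $\partial M$ is just the common boundary circles and no arcs can appear. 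With your step 3 replaced by such an explicit one-sided re-attachment (and smallness invoked only where it is actually needed, to preserve interior general position), your argument goes through and is essentially the paper's proof.
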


\begin{proof}
Assume $F_1,F_2$ are already in general position.
If $\partial \Sigma =\emptyset$
then $F_1,F_2$ themselves form the desired pair.
Let $\partial \Sigma \not=\emptyset$.
Consider a circle  $\gamma \subset \partial \Sigma$ and the annulus $A =\gamma \times I \subset \partial M$.
Denote by $\gamma_j =F_j \cap A,j=1,2$.
By Definition~\ref{def:CuttingSurface} of a cutting surface,
 the surfaces $F_j$ are isotopic to a horizontal fiber,
hence the circles $\gamma_j$ are both isotopic to a circle of the form $\gamma \times \{ t \} \subset A$,
and thus they are isotopic to one another.
Choose circles $\gamma',\gamma'' \subset \inter \Sigma$ so that
\begin{itemize}
\item $\gamma, \gamma',\gamma''$ are pairwise isotopic pairwise disjoint and
\item $(B' \times I) \cap \ell =\emptyset$, where $B' \subset \Sigma$ denotes the annulus 
cobound by $\gamma$ and $\gamma'$ and
\item $\gamma'' \subset \inter B'$.
\end{itemize}
Such circles exist because, by Definition~\ref{def:StringLink} of a string link,
$\ell \subset \inter \Sigma \times I$.

We achieve the desired position of $F_2$ in two steps.
First, we isotope $F_2$ inside $B' \times I$ (keeping it fixed outside the set)
so that the circle $F_2 \cap A''$ lies above the circle $F_1 \cap A''$ in the annulus $A'' =\gamma'' \times I$ .
Second, we superpose circles $F_2 \cap A$ with $F_1 \cap A$
by an isotopy of $F_2$ that keeps $F_2$ fixed outside $B'' \times I$,
where $B'' \subset \Sigma$ is an annulus cobounded by $\gamma$ and $\gamma''$. 
The latter isotopy can be chosen so that
the third condition of Definition~\ref{def:SpecialPair} holds.
The resulting isotopy is $\ell$-admissible
because its support is contained in the set $B' \times I$
which has empty intersection with the string link $\ell$.
The surface $F_1$ remained fixed during the above process,
hence the pair consisting of $F_2$ in its new position and $F_1$
is the pair we need.
\end{proof}

\section{Main theorem}
\label{sec:MainTheorem}

Recall that by  $\Sigma$ we denote a compact connected orientable (not necessarily closed) surface other than $S^2$,
and $M =\Sigma \times I$.

\begin{theorem}
\label{theorem:Main}
A non-braid string link $\ell \subset M$
can be written as the product (under stacking) of prime factors 
\begin{equation}
\label{eq:Decomposition}
\ell =\ell_1 \# \ldots \# \ell_k, \quad k \geq 1,
\end{equation}
where this decomposition is unique up to possibly permuting the order of the factors in the right-hand side
and the prime factors are defined up to braid equivalence.
\end{theorem}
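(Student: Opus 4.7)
The plan is to prove existence and uniqueness separately, using the Diamond Lemma (reviewed in Section~\ref{sec:DiamondLemma}) as the framework for uniqueness. I would treat the states as braid-equivalence classes of free string links and the reduction rule as replacing $\ell$ by the ordered pair $(\ell^-, \ell^+)$ cut from a non-trivial cutting surface.

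For existence, I would fix a well-founded complexity measure $c(\ell)$ on braid-equivalence classes of string links with the property that $c(\ell^{\pm}) < c(\ell)$ whenever the chosen cutting surface is non-trivial; a minimal crossing count over generic projections of $\ell$ should suffice. Starting from a non-braid $\ell$, I would pick any non-trivial cutting surface if one exists — by Definition~\ref{def:TrivialCuttingSurface}, the pieces $\ell^{\pm}$ are then both non-braids — and recurse on each factor. If no non-trivial cutting surface exists, $\ell$ is prime by Definition~\ref{def:Prime}, and well-foundedness of $c$ guarantees termination.

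For uniqueness, the essential step is the local confluence hypothesis of the Diamond Lemma: given two non-trivial cutting surfaces $F_1, F_2$ for $\ell$, the resulting two-factor decompositions admit a common refinement as a longer product, up to permutation and braid-equivalence. By Lemma~\ref{lemma:SpecialPair} I may assume $F_1, F_2$ form a special pair, so $F_1 \cap F_2$ is a finite disjoint union of circles (which equals $\partial F_1 = \partial F_2$ if $\partial \Sigma \neq \emptyset$). I would eliminate interior intersection circles by induction on their number using an innermost-circle argument: an innermost circle on $F_2$ bounding a disk $D \subset F_2$ permits a disk-swap surgery on $F_1$, yielding a cutting surface $F_1'$ with $F_1' \simeq F_1$ and strictly fewer intersection circles with $F_2$. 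Essential intersection circles, should they arise, require a parallel annular surgery exploiting the fact that both surfaces are isotopic to a horizontal fiber. Once the interior intersection is empty, either the two surfaces coincide up to equivalence — in which case the middle piece is a braid and the decompositions agree up to braid-equivalence — or one lies strictly below the other, producing a trisection $\ell = \ell^{--} \# \ell^{\mathrm{mid}} \# \ell^{++}$ that simultaneously refines both original decompositions.

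The hard part will be the disk-swap step, because the disk $D \subset F_2$ may be pierced by strands of $\ell$, so the naive surgery need not yield a cutting surface transverse to $\ell$ in exactly one point per strand. One must either push such strands off $D$ by an $\ell$-admissible isotopy supported near $D$, or else refine the complexity measure lexicographically to track $|F_1 \cap F_2|$ and then $|D \cap \ell|$ so that the induction still terminates. A further subtlety is that, since pure string links are not required, a braid appearing as the middle piece may permute strand endpoints, and the common-refinement step in local confluence must accommodate this using the characterization $\ell_1 \simeq \ell_2$ if and only if $\ell_1 = B_1 \# \ell_2 \# B_2$ noted after Definition~\ref{def:BraidEquivalence}.
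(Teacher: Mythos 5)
Your overall framework (a reduction graph on collections of string links, termination plus a confluence-type argument, Newman's lemma) is the same as the paper's, but both of your key steps have genuine gaps. For existence/termination, the claim that minimal crossing number satisfies $c(\ell^{\pm})<c(\ell)$ whenever the cutting surface is non-trivial is unjustified: stacking minimal diagrams only gives $c(\ell)\le c(\ell^-)+c(\ell^+)$, and the reverse inequality you would need is essentially super-additivity of crossing number under the stacking product, which is not known (it is the analogue of the open additivity problem for crossing number under connected sum). The paper avoids any diagrammatic measure: it verifies (FP) by combining Lemma~\ref{lemma:FC1} (a braid cannot be cut into two non-braids, so the surfaces arising in a sequence of non-trivial cuttings are pairwise inequivalent) with Lemma~\ref{lemma:FC2}, a Kneser--Haken type finiteness statement for incompressible surfaces in the link exterior taken from Freedman--Freedman. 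Without such an argument your recursion has no proof of termination.

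For uniqueness, your innermost-circle ``disk-swap'' step fails as stated. If an innermost circle of $F_1\cap F_2$ bounds a disk $D\subset F_2$ that is pierced by strands of $\ell$, the surgered surface is not a cutting surface, and you cannot in general push those strands off $D$: each strand meets $F_2$ in exactly one point, and if that point lies in $D$ any isotopy removing it from $D$ either moves $\ell$ (not allowed) or changes where the strand meets $F_2$, so the surgered surface need not be $\ell$-admissibly isotopic to $F_1$ --- your lexicographic refinement does not repair the fact that the resulting decomposition is no longer the one determined by $F_1$. Moreover, when $\Sigma$ has positive genus the intersection circles need not bound disks or cobound annuli at all; the complementary pieces of $S\cap T$ can have higher genus, and the paper must invoke Waldhausen's Corollary~5.5 in that case. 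The paper's actual mechanism is different: it never tries to disjoin $F_1$ and $F_2$ within their equivalence classes. Instead, Lemma~\ref{lemma:UIsCuttingSurface} produces a \emph{third} cutting surface $U=(T\setminus Y)\cup X$ (a rebuilding), which determines a new edge whose intersection number with each of the original surfaces is strictly smaller --- this is exactly the mediator-function condition (MF2) of Matveev's version of the Diamond lemma --- and the degenerate case in which every rebuilding is a trivial cutting surface is handled by Lemma~\ref{lemma:if U is trivial}, which shows that then the two cuttings give the same pair of factors up to braid-equivalence, so the two edges coincide. Your sketch has no counterpart to either of these two steps, and they are where the real work of the uniqueness proof lies.
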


We would like to make the following comments concerning the theorem.

1. We do not consider braids as prime string links (see Definition~\ref{def:Prime}),
hence all terms in the right-hand side of the representation~\eqref{eq:Decomposition}
are not braids.

2. We are forced to say that the order of prime factors in the decomposition~\eqref{eq:Decomposition}
is not defined
because we do not know the structure of the center of the string link monoid.
In the case of classical $2$-component string links, the question is answered
in~\cite{BlairBurkeKoytcheff}.
The second situation when the structure of the center is known
is the semigroup of knotoids~\cite{Turaev2012},
which, in our terms, is the monoid of $1$-component string links in the thickened annulus.
But, as far as we know, the question is open both for classical string links with more than $2$ strands
and for string links  with more than $1$ strand in a thickened surface.
The only thing we can say in the general case
is that a local knot (we mean a knotted arc lying inside a sphere intersecting the link transversally in exactly $2$ points)
can be isotoped along the corresponding strand to any position.
Using this obvious fact we can construct string links
that commute with any other string link,
and the question is whether, in the general case, the center of the string link monoid contains an element of some other nature.
In the case of classical $2$-component string links, such elements exist(see~\cite{BlairBurkeKoytcheff}),
while, in the case of knotoids, by~\cite[Theorem 4.2]{Turaev2012}, do not.
We conjecture that, in the general case, the answer to the above question is negative.
More specifically,  the center of the monoid of string links in a thickened surface other than a disk or $2$-sphere
conjecturally has the structure  analogous to the one in the case  of the semigroup of knotoids.

3. String links in $S^2 \times I$ are excluded from Theorem~\ref{theorem:Main}.
The prime decomposition in this case should be studied individually
because such string links have some specific properties.
For example, any $1$-component string link in thickened sphere
is trivial because, in this case, any arc
with endpoints in distinct boundary spheres can be unknotted by an isotopy.
This leads to the following property:
any strand of any string link in thickened sphere can be unknoted
at the expense of an additional complicating of all other strands.
We think that this fact may affect
the uniqueness part of the corresponding prime decomposition theorem.

\section{The Diamond lemma}
\label{sec:DiamondLemma}

The lemma was proved by 
M.~H.~A.~Newman in 1942~\cite{Newman42}.
Later, it was called `the Diamond lemma'.
We will use it
in the form proposed by S.~V.~Matveev~\cite{MatveevRoots2012}.
Here we briefly review  Matveev's version of the Diamond lemma,
referring the reader to~\cite{MatveevRoots2012}
for more details of the approach.

Let $\Gamma$ be a directed graph.
We denote its vertex and edge sets by $\vGamma$ and $\eGamma$, respectively.
An \emph{oriented path} 
in $\Gamma$ is a sequence of edges
so that the end of each edge coincides with the beginning of the next one. 

We say that 
a vertex $w \in \vGamma$ 
is a \emph{root}  of a vertex  $v \in \vGamma$ 
if 
\begin{itemize}
\item there is an oriented path in $\Gamma$ from $v$ to $w$ and 
\item $w$ is a sink, that is, it does not have outgoing edges. 
\end{itemize}
In general, a vertex in a directed graph may have no roots, one root, or many roots.
The Diamond lemma answers the
question: 
under what conditions does each vertex of the graph $\Gamma$ have exactly one root? 
The lemma proposes two properties of the graph $\Gamma$.
Taken together, these properties are sufficient for a positive answer to this  question. 

The first property is called the 
\emph{finite path} property 
and  is denoted by (FP).

\emph{
(FP): 
Any oriented path in $\Gamma$ is finite.
}
In other words, $\Gamma$ does not contain oriented cycles and infinite oriented paths. 
The property (FP) implies that 
each vertex $v \in \vGamma$ 
has at least one root. 

Let the set $\eTwoGamma \subset \eGamma \times \eGamma$ 
be the set of pairs $(e_1,e_2),e_1,e_2 \in \eGamma,e_1 \not=e_2$,
so that the edges have the same beginning and distinct ends.

We now formulate the second property, 
which is denoted by (MF), 
referring to the `Mediator Function'. 

\emph{
(MF): 
There exists a map $\mu: \eTwoGamma \to \mathbb{N} \cup \{ 0 \}$, 
called the mediator function,
which satisfies the following conditions: 
}
\begin{itemize}
\item (MF1)
if $\mu(\overrightarrow{v_0v_1},\overrightarrow{v_0v_2}) =0$, 
then there exist oriented paths from $v_1$ and from $v_2$ 
ending at the same vertex $v_3 \in \vGamma$. 

\item (MF2)
if $\mu(\overrightarrow{v_0v_1},\overrightarrow{v_0v_2})  >0$ , 
then there exists an edge $\overrightarrow{v_0v_3} \in \eGamma$ 
so that $\mu(\overrightarrow{v_0v_j},\overrightarrow{v_0v_3}) < \mu(\overrightarrow{v_0v_1},\overrightarrow{v_0v_2})$
for $j=1,2$.
\end{itemize}

%\bigskip
\noindent {\bf Lemma}
({\bf The Diamond lemma~\cite{MatveevRoots2012}}).
\emph{
If a directed graph $\Gamma$ has the properties (FP) and (MF), 
then each of its vertices has a unique root. 
}
\bigskip

Matveev suggested  the following scheme for proving typical prime decomposition theorems using the Diamond lemma.
Vertices of the graph $\Gamma$ are defined as finite collections of objects under consideration.
In our case, those are collections of string links.
Two vertices are connected with an edge if one can be obtained from the other as a result of a single simplification.
In our case, the simplification is cutting.
The oriented path represents a sequence of simplifications applying to the collection corresponding to its beginning.
The end of a path (the root) corresponds to a collection of indecomposable (prime) objects.
The mediator function, following Matveev's idea,  is a measure of proximity of the transformations corresponding to edges with the same beginning.
In our case, it is determined via intersection of cutting surfaces
(see Section~\ref{sec:MFDefinition} for details).
Therefore, the existence and uniqueness of the root of an arbitrary vertex
are a reformulation of the corresponding prime decomposition theorem.
Below, we implement this universal  scheme in the case of string links.

\section{Proof of Theorem~\ref{theorem:Main} }
\label{sec:ProofOfMainTheorem}

In this section we prove Theorem~\ref{theorem:Main}
using the Diamond lemma.
We start with the definition of the graph $\Gamma$ in Section~\ref{sec:GraphGamma}.
Then, we verify the conditions of the Diamond lemma:
the condition (FP) in Section~\ref{sec:FC}
and the existence of the mediator function in Section~\ref{sec:MF}.
The latter section is divided into four parts:
the definition of the function $\mu$ --- Section~\ref{sec:MFDefinition},
the verification of the condition (MF1) --- Section~\ref{sec:mf_1},
a technical lemma needed for verification of the condition (MF2) --- Section~\ref{sec:TechnicalLemma}
and the verification itself --- Section~\ref{sec:mf_2}.

All string links below, unless otherwise stated,
are $n$-component string links in the manifold $M =\Sigma \times I$.

\subsection{The graph $\Gamma$.}
\label{sec:GraphGamma}

Consider the set whose elements are
finite (non-empty) unordered collections of string links
\[
v =\{ \ell_1,\ldots,\ell_m \}, \quad m \geq 1,
\]
so that none of $\ell_1,\ldots,\ell_m$ is a braid.
Collections consisting of different numbers of string links are allowed.

We define an equivalence relation on the set:
$v \cong v'$ if and only if
there is a bijection $f: v \to v'$
so that $f(\ell) \simeq \ell$ for any $\ell \in v$.

We define \emph{vertices of the graph $\Gamma$}
to be the above collections of string links regarded up to the equivalence relation $\cong$.
The set of vertices of the graph $\Gamma$ will be denoted by $\vGamma$.
By abuse of notation, we will use the same notation both for a collection of string links and for the corresponding equivalence class.

Distinct vertices $v,v' \in \vGamma, v \not\cong v',$ are connected with a directed edge
$\overrightarrow{v,v'}$
if and only if there are two  members of $v'$ that  can be obtained from a member of $v$ as a result of a cutting
while all other members are common for the collections.
More precisely.
Let $v =\{ \ell_1,\ldots,\ell_m \}$,
$v' =\{ \ell'_1,\ldots,\ell'_{m+1} \}$
and one of the string links $\ell_1,\ldots,\ell_m$ (say, $\ell_1$)
can be cut  into string links $\ell_{1,1}, \ell_{1,2}$
so that
$\{ \ell_{1,1},\ell_{1,2},\ell_2,\ldots,\ell_m \} \cong \{ \ell'_1,\ldots,\ell'_{m+1} \}$.
The set of edges of the graph $\Gamma$
will be denoted by $\eGamma$.

It is necessary to emphasize the following: the edge $\overrightarrow{v,v'}$ implies that
$v$ can be transformed into $v'$ by a cutting,
but the vertices are connected with a single edge  independently on
whether there is a unique possibility to transform $v$ into $v'$ or not.
Therefore, a non-trivial cutting surface for a string link in a collection determines an edge of the graph $\Gamma$
while the converse is not true both for individual cutting surfaces
and for equivalence classes of cutting surfaces, as well.
For example, consider a collection $\{ \ell \# \ell, \ell \# \ell \}$ where $\ell$ is not a braid.
Then cutting surfaces that cut in half the first and the second string links in the collection
are inequivalent because they lie in distinct string links
but the corresponding cuttings give equivalent collections $\{\ell, \ell, \ell \# \ell \}$ and $\{ \ell \# \ell, \ell, \ell \}$.
One more example: cutting surfaces cutting off the first and last parts from $\{ \ell \# \ell \# \ell \}$ are inequivalent
but they give equivalent collections $\{ \ell, \ell \# \ell \}$ and $\{ \ell \# \ell, \ell \}$.

\subsection{Verification of the condition (FP)}
\label{sec:FC}

The idea we use below to check the finiteness condition
is similar to the one used in~\cite{BlairBurkeKoytcheff}
for proving the existence of the prime decomposition
in the case of classical $2$-component string links.

Fix a vertex $v =\{ \ell_1,\ldots,\ell_m \} \in \vGamma$.
The condition (FP) for the graph $\Gamma$ means that
any sequence of non-trivial cuttings that can be applied one by one to a collection of string links is finite.
Note that  it is sufficient to consider the case of a collection consisting of exactly one string link.
A sequence of sequential non-trivial cuttings of a string link is determined by the ordered collection of non-trivial cutting surfaces,
and all these surfaces can be embedded into the starting string link.
Therefore, to verify the condition (FP)
it is sufficient to prove the following two propositions.

\begin{lemma}
\label{lemma:FC1}
Let $S_1,S_2,S_3$ be pairwise disjoint cutting surfaces for a string link
so that $S_1 \simeq S_2$
and $S_3$ lies between $S_1$ and $S_2$.
Then $S_3 \simeq S_j$ for $j=1,2$.
\end{lemma}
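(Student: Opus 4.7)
The plan is to exploit the characterization of cutting-surface equivalence recalled earlier in Section~\ref{sec:CuttingSurface}: disjoint cutting surfaces $F,F'$ of a string link $\ell$ are equivalent if and only if the portion of $\ell$ lying between $F$ and $F'$ is a braid up to braid-equivalence. Write $\ell_a$ and $\ell_b$ for the portions of $\ell$ lying between $S_1,S_3$ and between $S_3,S_2$ respectively. The hypothesis $S_1\simeq S_2$ then says that $\ell_a\#\ell_b$ is braid-equivalent to a braid, while the conclusion $S_3\simeq S_j$ translates, via the same characterization, into each of $\ell_a$ and $\ell_b$ being braid-equivalent to a braid. Thus the lemma reduces to the braid-theoretic claim that \emph{if $\ell_a\#\ell_b$ is braid-equivalent to a braid, then so are $\ell_a$ and $\ell_b$ individually.}

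To prove this claim I would first apply an ambient isotopy of $M$ carrying $S_1$ and $S_2$ to standard horizontal positions, so that the region between them is literally $\Sigma\times[a,b]$ for some $-1<a<b<1$; cutting-surface equivalence is preserved under ambient isotopy, so this is without loss of generality. Next, using the braid-equivalence of $\ell'=\ell_a\#\ell_b$ to a braid, apply an ambient isotopy of $\Sigma\times[a,b]$ rel $\partial\Sigma\times[a,b]$ that makes every strand of $\ell'$ monotonic in the vertical coordinate; denote the resulting braid by $\tilde\ell'$ and the image of $S_3$ by $\tilde S_3$. The remaining task is to isotope $\tilde S_3$ to a horizontal fiber $\Sigma\times\{t^*\}$ by an ambient isotopy that keeps $\tilde\ell'$ monotonic. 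Once this is done, the restrictions of $\tilde\ell'$ to $\Sigma\times[a,t^*]$ and to $\Sigma\times[t^*,b]$ are braids by inspection, and pulling back through the previous isotopies shows that $\ell_a$ and $\ell_b$ are braid-equivalent to braids.

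The heart of the argument is the flattening of $\tilde S_3$ inside the braid $\tilde\ell'$. My plan has two substeps: first, ambiently isotope $\tilde S_3$ so that it becomes the graph of a smooth function $\phi\colon\Sigma\to(a,b)$; second, apply the straight-line homotopy of graphs $\phi_s=(1-s)\phi+st^*$. At every time $s$, the graph of $\phi_s$ is still a graph over $\Sigma$, and since each strand of $\tilde\ell'$ is monotonic in the $I$-coordinate it meets any graph transversally in exactly one point, so the straight-line homotopy is automatically $\tilde\ell'$-admissible.

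The principal obstacle will be the first substep: producing the initial graph presentation of $\tilde S_3$ by an isotopy that does not destroy the monotonicity of $\tilde\ell'$. I expect this to follow from a standard general-position argument --- $\tilde S_3$ is isotopic to a horizontal fiber, so its vertical tangencies can be removed by a small ambient perturbation, which can be arranged to be supported in the complement of $\tilde\ell'$ together with small neighborhoods of the finitely many points of $\tilde S_3\cap\tilde\ell'$, where the required local adjustment is transverse to the strand direction --- but turning this into a rigorous argument will be the most delicate step of the proof.
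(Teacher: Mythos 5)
Your opening reduction is fine and matches the paper's framing: using the stated correspondence between equivalence of disjoint cutting surfaces and the intervening piece being a braid, the lemma becomes exactly the assertion that if $\ell_a\#\ell_b$ is braid-equivalent to a braid then so are $\ell_a$ and $\ell_b$ separately. Note, however, that the paper does not prove this assertion at all: it cites Krebes and Blair--Burke--Koytcheff and observes that the latter proof carries over to $\Sigma\times I$ verbatim. So what you are really attempting is a new direct proof of that cited theorem, and the direct proof has a genuine gap at its heart.

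The gap is the flattening of $\tilde S_3$. First, the claim that a strand monotone in the vertical coordinate meets the graph of an arbitrary smooth $\phi\colon\Sigma\to(a,b)$ transversally in exactly one point is false. Parametrize the strand by height as $y\mapsto(\sigma(y),y)$; its intersections with the graph are the zeros of $g(y)=y-\phi(\sigma(y))$, and since $\phi\circ\sigma$ can increase faster than $y$ (the strand may travel quickly in the horizontal direction across a region where $\phi$ is steep), $g$ need not be monotone and can vanish three or more times. Consequently the straight-line homotopy $\phi_s=(1-s)\phi+st^*$ is not ``automatically $\tilde\ell'$-admissible'': one can even arrange that the graphs of $\phi_0$ and $\phi_1$ each meet a given monotone strand once while an intermediate graph meets it three times, so the surface ceases to be a cutting surface at intermediate times (violating Definition~\ref{def:AdmissibleIsotopy}), and with it collapses the ``pulling back'' step that identifies $\ell_a$ and $\ell_b$ with the two halves of the monotone braid. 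Second, your first substep is not a general-position matter: a surface isotopic to a horizontal fiber may have large folds, and its vertical tangencies cannot in general be removed by a \emph{small} perturbation, let alone by an isotopy that keeps $\tilde\ell'$ monotone or keeps $\tilde S_3$ a cutting surface throughout. Producing a position in which the cutting surface is a level set of a product structure for which the strands are still monotone is essentially the entire content of the Krebes/Blair--Burke--Koytcheff theorem, so as written the argument assumes the hard part. The correct (and the paper's own) route is simply to invoke that theorem, whose proof is noted in the cited work not to depend on any specificity of $D^2\times I$.
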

In other words, a braid cannot be cut into two non-braid string links.
An analogous proposition for string links in the cylinder $D^2 \times I$ 
was proved originally in~\cite{Krebes}
and then proved using other arguments in~\cite{BlairBurkeKoytcheff}.
The latter proof does not use a specificity of the cylinder$D^2 \times I$,
hence it can be applied to our notion of string link without any changes.

In our context, the above lemma implies that
all surfaces involved in the collection of cutting surfaces representing a sequence of  non-trivial cuttings of a string link
are necessarily pairwise inequivalent.

\begin{lemma}
\label{lemma:FC2}
Any collection of pairwise inequivalent cutting surfaces
for a string link is finite.
\end{lemma}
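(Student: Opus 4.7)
The plan is to argue by contradiction via a compactness argument on the intersection data of the cutting surfaces with the strands of $\ell$. Suppose $\{F_k\}_{k=1}^{\infty}$ is an infinite family of pairwise inequivalent cutting surfaces. Parameterize the $i$-th strand of $\ell$ by a homeomorphism $\sigma_i \colon [0,1] \to M$ sending $0$ to the endpoint in $\partial_-$ and $1$ to the endpoint in $\partial_+$, and let $t_i^k \in (0,1)$ be the unique parameter with $F_k \cap \sigma_i([0,1]) = \{\sigma_i(t_i^k)\}$. The tuples $\vec{t}_k = (t_1^k,\ldots,t_n^k)$ lie in the compact cube $[0,1]^n$, so after passing to a subsequence one may assume $\vec{t}_k \to \vec{t}_\infty$. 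Choose $\varepsilon > 0$ small enough that each arc $\sigma_i((t_i^\infty - \varepsilon,\, t_i^\infty + \varepsilon))$ is contained in an open ball $B_i \subset \inter M$, the balls $B_i$ being pairwise disjoint and each $B_i \cap \ell$ equal to that arc. For $k, k'$ sufficiently large, both $F_k \cap \ell$ and $F_{k'} \cap \ell$ lie inside $\bigcup_i B_i$.

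It then suffices to show that $F_k \simeq F_{k'}$ for such $k, k'$, contradicting the pairwise inequivalence. By Lemma~\ref{lemma:SpecialPair}, put $F_k$ and $F_{k'}$ in special position, so that $F_k \cap F_{k'}$ is a disjoint union of circles (together with common boundary when $\partial\Sigma \ne \emptyset$). Reduce $\inter F_k \cap \inter F_{k'}$ to empty via a sequence of surgeries: an inessential intersection circle bounds innermost disks on each surface, and the resulting sphere bounds a ball in the irreducible manifold $\Sigma \times I$ (using $\Sigma \ne S^2$), yielding a disk-swap isotopy that reduces $|F_k \cap F_{k'}|$; an essential intersection circle is removed by an annular reduction available because $F_k$ and $F_{k'}$ are both isotopic to a horizontal fiber $\Sigma \times \{y\}$. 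By first processing components of $F_k \cap F_{k'}$ lying outside $\bigcup_i B_i$, each surgery support can be arranged disjoint from $\ell$, so every isotopy is $\ell$-admissible. After finitely many steps, $F_k$ and $F_{k'}$ have disjoint interiors and cobound a region $R \cong \Sigma \times [0,1]$ in which $\ell \cap R$ consists of $n$ arcs, each contained in a single $B_i$ and joining a point of $F_k$ to a point of $F_{k'}$. Each such arc is $\ell$-admissibly isotopic inside $B_i$ to a monotone arc, so $\ell \cap R$ is a braid. By the characterization of equivalent disjoint cutting surfaces stated in Section~\ref{sec:CuttingSurface}, $F_k \simeq F_{k'}$.

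The main obstacle is the $\ell$-admissible reduction of $F_k \cap F_{k'}$ to empty interior: one must uniformly handle both inessential components (via irreducibility of $\Sigma \times I$) and essential components (via the fiber structure of $M$), and one must verify that a judicious ordering of the surgeries keeps every surgery support disjoint from $\ell$. The compactness-based localization of the intersections $F_k \cap \ell$ and $F_{k'} \cap \ell$ inside the pairwise disjoint balls $B_i$ is the key technical device enabling this, and the exclusion of $\Sigma = S^2$ is essential throughout.
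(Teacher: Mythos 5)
There is a genuine gap, and it is fatal to the whole strategy: the equivalence class of a cutting surface is not determined, even approximately, by where the surface meets the strands, so the compactness argument on the tuples $\vec{t}_k$ cannot force $F_k \simeq F_{k'}$. Concretely, given any cutting surface $F$ one can perform an $\ell$-admissible isotopy that slides the point $F \cap C_i$ along the strand $C_i$ like a bead on a wire, dragging a small meridian disk of $F$ along $C_i$ and stretching the rest of $F$ into a thin tube that follows the strand; at every moment the surface stays transverse to $\ell$ with one intersection point per strand, so it remains a cutting surface and its equivalence class is unchanged. Applying this to, say, a long trefoil in $D^2 \times I$ with $F_1$ a flat disk below the knotted part and $F_2$ a cutting surface above it, you obtain two \emph{inequivalent} cutting surfaces whose intersection points with the strand are arbitrarily close and which are even disjoint: the dragged $F_2$ avoids $F_1$ because its tube follows a portion of the strand that never meets $F_1$. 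For this pair your argument reaches its final step and fails there: the region $R$ between the surfaces does contain only a tiny arc of $\ell$ inside a small ball, but ``monotone'' must be understood with respect to the product structure of $R \cong \Sigma \times I$, and that structure is twisted by the tube, so the tiny arc is a knotted (non-braid) free string link in $R$. Containment of each arc in a small ambient ball $B_i$ simply does not imply that $\ell \cap R$ is a braid, and if it did, your argument would prove the false statement that closeness of intersection parameters implies equivalence. The earlier step is also unjustified: you assert that all circles of $\inter F_k \cap \inter F_{k'}$ can be removed by $\ell$-admissible isotopies with support disjoint from $\ell$, but nothing in the setup guarantees such supports exist, and the ordering device you invoke does not address surfaces that wrap around the strands as in the example above.

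The paper's proof proceeds along an entirely different line, and the difference is exactly where the real content lies: it passes to the complement $\tilde{M}$ of a tubular neighborhood of $\ell$ in $M$, observes that cutting surfaces become incompressible, non-boundary-parallel surfaces with Betti number bounded by $b_1\Sigma + n + 1$, that equivalent cutting surfaces become parallel surfaces, and then invokes the Kneser--Haken type finiteness theorem of Freedman and Freedman: any sufficiently large collection of such surfaces contains two parallel ones. The finiteness in Lemma~\ref{lemma:FC2} is thus a consequence of deep $3$-manifold topology applied to the link exterior, not of a compactness argument on intersection data; any correct proof has to bound the number of pairwise non-parallel essential surfaces in $\tilde{M}$, which is precisely the step your proposal bypasses.
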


The lemma is a consequence of the following theorem
proved in~\cite{FreedmanFreedman}.

\bigskip
{\bf Theorem~\cite{FreedmanFreedman}}.
\emph{
Let $\tilde{M}$ be a compact 3-manifold with boundary
and $b$ an integer greater than
zero. There is a constant $c(\tilde{M}, b)$ so that if $F_1,\ldots, F_k$, $k > c$, is a
collection of incompressible
surfaces such that all the Betti numbers $b_{1}F_{i} < b$, $1 \leq i \leq k$, and no
$F_i$, $1 \leq i \leq k$, is a boundary
parallel annulus or a boundary parallel disk, then at least two members $F_i$ and $F_j$
are parallel.
}
\bigskip

To prove Lemma~\ref{lemma:FC2}
using the theorem,
we transform $M =\Sigma \times I$ into $\tilde{M}$ by
removing from $M$ an open tubular  regular neighborhood of all components of the string link under consideration.
As a result of the transformation of the manifold, a cutting surface transforms into a surface in $\tilde{M}$,
which is homeomorphic to the surface before the transformation with $n$ additional holes
(here $n$ is the number of components of the string link).
The surface is incompressible in $\tilde{M}$.
Indeed, if $\Sigma$ is not a disk this follows from Definition~\ref{def:CuttingSurface};
arguments concerning the case of a disk can be found in~\cite{BlairBurkeKoytcheff}.
Equivalent cutting surfaces transform into parallel surfaces in $\tilde{M}$.
A trivial cutting surface becomes boundary parallel.
As the value of the constant $b$ we can use $b =b_1\Sigma +n+1$.

\subsection{Verification of the existence of the mediator function}
\label{sec:MF}

\subsubsection{Definition of the mediator function}
\label{sec:MFDefinition}

Let the mediator function $\mu: \eTwoGamma \to \mathbb{N} \cup \{ 0 \}$
is given by
\begin{equation*}
\label{eq:mu}
\mu(e_1,e_2) =\min_{(S_1,S_2)} |\inter S_1 \cap \inter S_2|
\end{equation*}
where the minimum is taken over all special pairs $S_1,S_2$ of  cutting surfaces
(see Definition~\ref{def:SpecialPair})
so that the edge $e_j \in \eGamma$ is determined by the surface $S_j,j=1,2$,
and $|\inter S_1 \cap \inter S_2|$ denotes the number of connected components of $\inter S_1 \cap \inter S_2$;
recall that, by Definition~\ref{def:SpecialPair},
the intersection consists of circles only.

As we mentioned above, the edges of the graph $\Gamma$
are determined by cutting surfaces.
Hence elements of $\eTwoGamma$ are determined by pairs of cutting surfaces,
and Lemma~\ref{lemma:SpecialPair}
guarantees that
we do not lose anything using only special pairs of cutting surfaces.
More precisely,
if cutting surfaces $S_1,S_2$ determine a pair $(e_1,e_2) \in \eTwoGamma$
then there is a special pair $S'_1,S'_2$ of cutting surfaces
so that $S'_j \simeq S_j,j=1,2,$
thus the pair $(S'_1,S'_2)$ determines the same pair $(e_1,e_2)$.

\subsubsection{Verification of the condition (MF1)}
\label{sec:mf_1}

Consider distinct edges
$e_1 =\overrightarrow{v_0,v_1}, e_2 =\overrightarrow{v_0,v_2} \in \eGamma$
so that $\mu(e_1,e_2) =0$.
This means that there are cutting surfaces $S_1,S_2$
having disjoint interiors,
so that cutting by $S_j$ transforms $v_0$ into $v_j,j=1,2$.
(Recall that, by definition, there are no multiple edges in the graph $\Gamma$, hence $v_1$ and $v_2$ are distinct vertices.)

We check that in this case there are a vertex $v_3 \in \vGamma$ and edges
$\overrightarrow{v_1,v_3}, \overrightarrow{v_2,v_3} \in \eGamma$.

Assume the surfaces $S_1,S_2$ lie in distinct string links in the collection
$v_0 =\{ \ell_1,\ell_2,\ldots,\ell_m \}$ (say, in $\ell_1, \ell_2$, respectively).
The cutting by $S_1$ transforms the collection $v_0$ into  $v_1 =\{ \ell_{1,1}, \ell_{1,2},\ell_2,\ldots,\ell_m \}$.
The transformation keeps the link $\ell_2$ unchanged, hence the cutting by $S_2$ can be performed.
As a result, we obtain $v_3 =\{ \ell_{1,1},\ell_{1,2},\ell_{2,1},\ell_{2,2},\ldots,\ell_m \}$.
Analogously, we can perform the same cuttings in opposite order
and obtain the same collection $v_3$, this time, from $v_2 =\{ \ell_1,\ell_{2,1},\ell_{2,2},\ldots,\ell_m \}$.

Assume $S_1,S_2$ lie in the same string link (say, $\ell_1$) in the collection $v_0$.
Since $\inter S_1 \cap \inter S_2 =\emptyset$,
the union $S_1 \cup S_2$ cuts $\ell_1$ into three free string links: $\ell_{1,1},\ell_{1,2},\ell_{1,3}$.
Let $S_1$ and $S_2$ separate $\ell_{1,1},\ell_{1,2}$
and $\ell_{1,2},\ell_{1,3}$, respectively.
Note that these three free string links are not braids.
Indeed, $\ell_{1,1}$ and $\ell_{1,3}$ are not braids because otherwise
$S_1$ and $S_2$ are trivial cutting surfaces.
The middle part $\ell_{1,2}$ is not braid because otherwise $S_1 \simeq S_2$
and hence $v_1 \cong v_2$.

Up to braid equivalence, the cutting by $S_1$ transforms $v_0$ into $v_2 =\{ \ell_{1,1}, \ell_{1,2} \# \ell_{1,3}, \ldots, \ell_m \}$.
After this, the surface $S_2$ remains non-trivial in $\ell_{1,2} \# \ell_{1,3}$ because it lies between $\ell_{1,2}$ and $\ell_{1,3}$,
which both are not braids.
Hence we can perform the cutting by $S_2$,
which gives $v_3 =\{ \ell_{1,1}, \ell_{1,2}, \ell_{1,3},\ldots,\ell_m \}$.
Analogous arguments show that the cuttings can be performed in the opposite order,
and the result will be again $v_3$, this time, obtained from
$v_2 =\{ \ell_{1,1} \# \ell_{1,2}, \ell_{1,3},\ldots,\ell_m \}$.

Therefore, in both of the above cases, we have two directed paths
from $v_0$ to $v_3$:
$\overrightarrow{v_0,v_1}, \overrightarrow{v_1,v_3}$
and $\overrightarrow{v_0,v_2},\overrightarrow{v_2,v_3}$.
This implies the condition (MF1).

\subsubsection{Main technical lemma}
\label{sec:TechnicalLemma}

Here we prove a technical lemma
required for the verification of the condition (MF2)
below in Section~\ref{sec:mf_2}.

Consider cutting surfaces (see Definition~\ref{def:CuttingSurface})
$S,T$ for a string link $\ell$ 
forming a special pair (see Definition~\ref{def:SpecialPair})
so that $\inter S \cap \inter T \not=\emptyset$.

Our assumptions  on $S$ and $T$ imply that
$S \cap M_T^{\pm} \not=\emptyset$.
Hence $T$ cuts $S$ into a finite number of connected regions $X_1,\ldots,X_m,m \geq 2$.
Let $\SByT$ denote the set 
\[
\SByT =\{ \bar{X_1},\ldots,\bar{X_m} \}
\]
where $\bar{\cdot}$ denotes  the closure of the specified set.
Analogously, we can define the set $\TByS$ 
consisting of closures of those parts into which $S$ cuts $T$.

Then for any $X \in \SByT$, the following holds:
\begin{itemize}
\item $X$ is connected,
\item $\inter X \not=\emptyset$ and $X \not=S$,
\item $\inter X \subset M_T^{+}$ or $\inter X \subset M_T^{-}$,
\item $\partial X \subset T$.
\end{itemize}
Any $Y \in \TByS$ has analogous properties.

Pick $X \in \SByT$.
Assume there is a closed subset $Y \subset T$
so that $\partial Y =\partial X$
(we do not assume here that $Y$ is connected or $Y \in \TByS$).
Then, we can perform the following transformation:
we cut off $Y$ from $T$ and then glue $X$ in its place.
The result is a surface,
which we denote by $U$:
\[
U =(T \setminus Y) \cup X.
\]
In this case, we will say
that $U$ is a \emph{rebuilding} of $T$ by $X$.

\begin{lemma}
\label{lemma:UIsCuttingSurface}
Let $S$ and $T$ be cutting surfaces for a string link $\ell$
so that
\begin{itemize}
\item they form a special pair and
\item $\inter S \cap \inter T \not=\emptyset$.
\end{itemize}
Then there are $X \in \SByT$ and $Y \in \TByS$
satisfying the following conditions:
\begin{itemize}
\item[\emph{(a)}] $\partial X =\partial Y$,
\item[\emph{(b)}] there is an isotopy fixing the common boundary of $X$ and $Y$
that takes $X$ to $Y$,
\item[\emph{(c)}] the union $X \cup Y$ is a closed orientable surface bounding a submanifold $\hat{XY} \subset M$,
\item[\emph{(d)}] the surfaces $U$ and $V$
that are rebuildings of $T$ by $X$ and $S$ by $Y$,respectively,
are cutting surfaces for $\ell$. 
\end{itemize}
\end{lemma}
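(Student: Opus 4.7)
The plan is to identify an ``innermost bubble'' between $S$ and $T$: a matched pair of subsurfaces cutting off a bounded three-dimensional region with a product structure. First, I would record two background facts that I will use throughout. Since $\Sigma \not= S^2$, the manifold $M = \Sigma \times I$ is irreducible, and both $S$ and $T$ are incompressible (as they are isotopic to the fiber $\Sigma$, which is $\pi_1$-injective in $M$). Consequently, each intersection circle $C \subset \inter S \cap \inter T$ has the same topological type on $S$ as on $T$: incompressibility lets us transfer a disk in $M$ bounded by $C$ to a disk on either surface, so $C$ is null-homotopic on $S$ iff null-homotopic in $M$ iff null-homotopic on $T$.

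The core step is producing the matched pair $(X,Y)$. I would look among the connected components of $M \setminus (S \cup T)$ for an ``interior'' component $R$ (not containing $\partial_{\pm}$) whose frontier is $X \cup Y$ for some $X \in \SByT$ and $Y \in \TByS$ with $\partial X = \partial Y$, and then set $\hat{XY} = \bar{R}$. When some intersection circle is null-homotopic in $M$, an iterated innermost-disk argument alternating between $S$ and $T$ (well-founded because there are only finitely many circles) yields such a pair as two disks bounded by a common circle. When every intersection circle is essential on $\Sigma$, one instead works with an outermost pair of parallel intersection circles, using that two essential circles on $S$ cobound an annulus iff the same holds on $T$, to obtain $X, Y$ as matched annuli.

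Conditions (a) and (c) then hold by construction: $\partial X = \partial Y$, and $X \cup Y$ is the manifold boundary of $\hat{XY}$, hence a closed surface in $M$; orientability follows from $X \cup Y$ being two-sided in the orientable $M$. For (b), I would argue that $\hat{XY}$ is a product $X \times I$ with $X = X \times \{0\}$ and $Y = X \times \{1\}$, forced by the incompressibility of $S,T$ together with the irreducibility of $M$ (no handle can be trapped between matched pieces of isotopic copies of the fiber). The product structure immediately yields an isotopy from $X$ to $Y$ fixing their common boundary. For (d), each strand $\sigma$ of $\ell$ meets $S$ in one point and $T$ in one point, so $\sigma \cap (X \cup Y)$ has at most one point on each of $X, Y$. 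Since $\sigma$ is an embedded arc, $\sigma \cap \hat{XY}$ is at most a single subarc of $\sigma$, with (when nonempty) one endpoint on $X$ and one on $Y$; hence $\sigma$ meets $X$ iff $\sigma$ meets $Y$. This matching immediately implies that after replacing $Y$ by $X$ (resp.\ $X$ by $Y$), the surfaces $U$ and $V$ still meet each strand transversally in exactly one point.

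The main obstacle will be the matched-pair step: finding $(X,Y)$ in configurations where intersection circles are nested differently on $S$ than on $T$, so that ``innermost on $S$'' need not coincide with ``innermost on $T$''. In such cases one must take matched annular regions between consecutive circles rather than matched disks. The clean way to unify both cases is to work with interior regions of $M \setminus (S \cup T)$ of minimal complexity and argue, via the graph whose vertices are components of $M \setminus (S \cup T)$ and whose edges are elements of $\SByT \sqcup \TByS$, that at least one interior vertex has exactly one $S/T$-edge and one $T/S$-edge on its boundary. This combinatorial--topological claim, together with the verification that the resulting $\hat{XY}$ is a product region, is the technical heart of the lemma.
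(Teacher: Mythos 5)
Your outline gets (a), (c) and (d) essentially as the paper does (the strand-counting argument for (d) via the region $\hat{XY}$ is the same), but the two places you yourself flag as ``the technical heart'' are exactly where the proof has to be done, and your plan for them does not go through as stated. First, your case analysis is incomplete: you only provide a mechanism for matched disks (some intersection circle inessential) and matched annuli (``an outermost pair of parallel intersection circles''). But when every intersection circle is essential, $\SByT$ need not contain any annulus at all and no two intersection circles need cobound an annulus in $S$ --- e.g.\ $\Sigma$ of genus two with $S\cap T$ a single separating essential circle, so that the regions are once-punctured tori. The paper devotes a separate third case to this: for an arbitrary $X\in\SByT$ it first produces a (possibly disconnected) $Y\subset T$ with $\partial Y=\partial X$ by a homology-plus-orientability argument, then isotopes $X$ into the product $M_Y=p_T^{-1}(Y)$ so that $X\cap\partial M_Y=\partial X$ (eliminating trivial and nontrivial circles of $X\cap p_T^{-1}(\gamma_j)$), and only then gets the rel-boundary isotopy from $X$ to $Y$. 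Your proposal has no substitute for this step.

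Second, condition (b) is asserted rather than proved: ``$\hat{XY}$ is a product, forced by incompressibility and irreducibility (no handle can be trapped)'' is precisely the statement to be established, and incompressibility plus irreducibility alone are not an argument. In the paper even the annulus case requires a delicate analysis with a transversal curve $\delta$ and the vertical annulus over it (ruling out the bad arc configurations $b_{14},b_{23}$ and $b_{13},b_{24}$), and the general case is settled by citing Waldhausen's Corollary~5.5, after constructing an explicit vertical homotopy from $X$ to $Y$ inside $M_Y$; nothing in your sketch supplies either ingredient. Likewise, your combinatorial claim that some interior component of $M\setminus(S\cup T)$ is bounded by exactly one region of $\SByT$ and one of $\TByS$ is plausible but unproven, and it is not how the paper proceeds: the paper finds the matched pair with $X\in\SByT$ and $Y\in\TByS$ by an iterated replacement argument (passing to innermost/inner circles of $\inter Y_i\cap S$, which terminates because $S\cap T$ is finite), separately in each of the three cases. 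So the proposal is a reasonable plan whose easy parts coincide with the paper, but the existence of the matched pair beyond the disk/annulus cases and the rel-boundary isotopy (b) --- the actual content of the lemma --- are missing.
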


\begin{proof}
Firstly, we prove that there are $X \in \SByT, Y \in \TByS$
so that (a)--(c) hold.
The main part of the proof is divided into three cases.
Lastly, we check that in all these cases, $U$ and $V$ are cutting surfaces for the link in question.

The proof below is divided into several  steps ---
italicized intermediate statements whose proofs  immediately  follow them. 

We begin with auxiliary terminology.
Let $F$ be a cutting surface for $\ell$
and $\gamma \subset F$ be an embedded oriented circle.
Consider a positive basis $(e_1,e_2,e_3)$ in $M$ so that
$e_1$ and $e_2$ are tangent vectors to $F$ at a point $x \in \gamma$
while $e_3$ is the normal vector to $F$ at $x$ oriented inward $M_F^{+}$.
We will say that a direction is \emph{to the left} of $\gamma$ in the surface $F$
if the direction coincides with the direction of $e_2$ when $e_1$ is a positive tangent vector to $\gamma$.
About the opposite direction, we will say that the direction
is the direction \emph{to the right} of $\gamma$ in the surface $F$.
Also, we will say that 
a closed region $Z \subset F$
\emph{lies to the left} of $\gamma$ if $\gamma \subset \partial Z$ and 
the direction to the left of $\gamma$ in $F$ 
is the direction inward $Z$.
Sometimes, if it is clear from the context what surface is meant by saying about the directions to the left/right of a circle,
the explicit specification of that surface will be omitted.

\emph{The rebuilding of $T$ by $X$ (resp. $S$ by $Y$) can be performed  for any $X \in \SByT$ (resp. $Y \in \TByS$).}

Since $S$ and $T$ form the special pair,
their intersection is a non-empty finite collection of pairwise disjoint circles,
hence
\[
\partial X =\gamma_1 \cup \ldots \cup \gamma_k,k \geq 1,
\]
where $\gamma_j,j=1,\ldots,k,$ are pairwise disjoint circles.
If $\partial S \not=\emptyset$, some of them, maybe, lie in $\partial S$.
Let the  circles be oriented so that
$X$ lies to the left of $\gamma_j$ in the surface $S$ for any $j=1,\ldots,k$. 

The circles $\gamma_1,\ldots,\gamma_k$ cobound the surface $X$ in $M$,
thus $[\gamma_1] +\ldots +[\gamma_k] =0$ in $H_1(M)$.
Since $M$ is homeomorphic to $T \times I$,
there is a natural isomorphism between the groups $H_1(T)$ and $H_1(M)$,
which maps the homology class of a loop in $T$ to the homology class of the same loop viewed as a loop in $M$.
Hence in $H_1(T)$ we also have $[\gamma_1] +\ldots +[\gamma_k] =0$.
Thus
$T \setminus \partial X$
is the union of two or more non-empty pairwise disjoint connected regions
(the number of regions is greater than $1$ because $X \not=S$).

Assume (aiming to reach a contradiction)
that there are two circles in $\partial X$
(say, $\gamma_1,\gamma_2$)
having the following properties:
\begin{itemize} 
\item There are points $x_j \in \gamma_j,j=1,2,$ and an arc $A =A[x_1,x_2] \subset T$
so that $A \cap \partial X =\{ x_1, x_2\}$.
\item The arc $A$ goes away from $\gamma_1$ to the left (in the surface $T$)
while it reaches $\gamma_2$ from the right.
\end{itemize}
In this case,
since $x_1,x_2 \in \partial X$ and $X$ is connected,
there is an arc 
$A' =A' [x_1, x_2] \subset X$.
We check that, under our assumptions, the loop $A \cup A'$ is an orientation reversing path in $M$.
Indeed, both arcs $A,A'$ go away from $\gamma_1$ to the left (in $T$ and $S$, respectively),
and $A'$ comes to $\gamma_2$ again from the left 
while $A$ comes to the same point in $\gamma_2$ from the right.
Since $S$ and $T$ are orientable, it means that narrow rectangles $R \subset T$ and $R' \subset X$
(having $A$ and $A'$ as their axial lines)
form the M\"obius strip.
This gives us the desired contradiction with the orientability of the manifold $M$
because surfaces $S,T$ are orientable and $\inter X$ is wholly contained either in $M_T^{+}$ or in $M_T^{-}$.

Therefore,there is $Y \subset T$
(which is not necessarily connected)
having the following properties analogous to those of $X$:
\begin{itemize}
\item $\inter Y \not=\emptyset$ and $Y \not=T$,
\item $\partial Y =\gamma_1 \cup \ldots \cup \gamma_k$,
\item $Y$ lies to the left of all circles in $\partial X =\partial Y$.
\end{itemize}

It is necessary to emphasize that the desired property $Y \in \TByS$ is not guaranteed for a while.
However, independently of whether $Y \in \TByS$ or not,
we can rebuild the surface $T$ by $X$
using the procedure described at the beginning of this section.
Analogous property of each region in $\TByS$ can be established by similar arguments.

\emph{The union $X \cup Y$ is a closed orientable surface that bounds a submanifold $\hat{XY} \subset M$.}

The union $X \cup Y$ is a closed surface.
The surface is orientable because:
\\1. $X \subset S,Y \subset T$ are subsurfaces of the orientable surfaces $S$ and $T$.
\\2. The orientations of $X$ and $Y$ induced by the orientation of $M$ using upward normal vectors of $S$ and $T$
are agreed in the following sense:
the upper side of $X$ is glued to the lower side of $Y$ and vice versa
in all circles where $X$ and $Y$ are glued.

The surface $X \cup Y$ does not separate $\partial_{-}$ and $\partial_{+}$.
For example, to go from $\partial_{-}$ to $\partial_{+}$ without intersecting $X \cup Y$,
we can intersect $S \setminus X$ and $T \setminus Y$
near any of $\gamma_j$ except for $\gamma_j \subset \partial S$
to the right of it
(such a circle exists because $X \not=S$).

Therefore, $X \cup Y$ bounds a submanifold $\hat{XY} \subset M$
so that
$\partial \hat{XY} =X \cup Y$.

Now we begin a proof that there are $X \in \SByT$ and $Y \in \TByS$
so that the properties (a)--(c) hold.

\begin{center}
{\sc Case 1:} there is a disk in $\SByT$.
\end{center}

The proof for the case $\TByS$ contains a disk is completely analogous.

Assume there is $X \in \SByT$
that is homeomorphic to a disk
(note $\partial X \not\subset \partial S$ because otherwise $X =S$).
Then the circle $\partial X$ bounds a disk $Y \subset T$
(otherwise the disk $X$ is a compressing disk for the surface $T$).
%Note that while a cutting surface is incompressible).
%In the situation under consideration we can not think about whether the disk $Y$ lies to the left of $\partial Y$ or not.
So we have the following fact, which will be used below.
\begin{remark}
\label{remark:gamma is not trivial}
If one of the collections $\SByT$ or $\TByS$ contains (resp. does not contain) a disk,
then the other one contains (resp. does not contain) a disk, too.
\end{remark}

The union $X \cup Y$ is a sphere.
We exclude the case $\Sigma =S^2$, and the manifold $M$ is irreducible,
hence the sphere $X \cup Y$ bounds a ball $\hat{XY}$,
 and we can isotope $X$ to $Y$ keeping the circle $\partial X =\partial Y$ fixed.
Since, by construction of rebuilding $T$ by $X$, the surfaces $S \setminus X$ and $T \setminus Y$ coincide,
this isotopy can be extended by identity to an isotopy taking $S$ to $T$.
The latter is isotopic to a fiber in the direct product $\Sigma \times I$,
hence $S$ is isotopic to a fiber, too.

\emph{There are $X$ and $Y$ as above, so that $X \in \SByT$ and $Y \in \TByS$.}

Let $X_1 \in \SByT$ and $Y_1 \subset T$,
be renamed disks as in the above discussion.
If $Y_1 \in \TByS$, we have the desired situation.
Assume $Y_1 \not\in \TByS$. It means $\inter Y_1 \cap S \not=\emptyset$.
The intersection consists of pairwise disjoint circles.
Each of them bounds a disk in $T$.
Choose in the collection of circles an innermost one,
and let $Y_2$ be a disk bounded by the circle.
Then $Y_2 \in \TByS$
and the above arguments, in which $S$ and $T$ are transposed, imply that
there is a disk $X_2 \subset S$ so that $\partial X_2 =\partial Y_2$.
If $X_2 \in \SByT$, we have the desired pair of disks.
If not, we can choose a disk $X_3 \subset X_2$ so that $X_3 \in \SByT$, and so on.
The process cannot be infinite 
because $S \cap T$ consists of a finite number of circles,
and each circle in the intersection bounds exactly one pair of disks in the surfaces under consideration,
hence by the way we cannot meet a circle more than once.

\begin{center}
{\sc Case 2:} $\SByT$ does not contain disks, but it contains an annulus.
\end{center}

Assume there is $X \in \SByT$
that is homeomorphic to an annulus.
Let $\partial X =\gamma_1 \cup \gamma_2$
where $\gamma_1,\gamma_2$ are boundary circles of $X$
oriented so that $X$ lies to the left of them.

Let $f =f(s,t): S^1 \times [0,1] \to X$ be an isotopy
taking $\gamma_1$ to $\gamma_2$ in $X$ (with orientation ignored),
and let $p_T: M \to T$ denote the projection map
coming from a representation $M =T \times I$ where $I$ is a segment.
Then the composition $p_t \circ f$
is a homotopy taking $\gamma_1$ to $\gamma_2$ in $T$.
It is known~\cite[Theorem 2.1]{Epstein66}
that two homotopic simple closed curves on an orientable   surface are isotopic.
Therefore, $\gamma_1,\gamma_2$ are isotopic in $T$.
Since $\gamma_1 \cap \gamma_2 =\emptyset$, the curves cobound an annulus $Y \subset T$.
Note the following fact for using below:
\begin{remark}
\label{remark:AnnuliInBothCollections}
If one of the collections $\SByT$ or $\TByS$ contains (resp. does not contain) an annulus,
then the other one contains (resp. does not contain) an annulus, too.
\end{remark}

As was shown above, the union $X \cup Y$ is an orientable surface.
In the case under consideration, it is a torus (not the Klein bottle),
and this torus bounds a submanifold $\hat{XY} \subset M$.

\emph{There is an isotopy fixing $\partial X =\partial Y$ that takes $X$ to $Y$.
}

To prove this fact we need to consider two cases
depending on whether the boundary circles of $Y$ separate the surface $T$ or not.

{\bf (a)}.
Suppose $\gamma_1$ (and hence $\gamma_2$ as well) does not separate the surface $T$.
In particular, this implies that both  $\gamma_1$ and $\gamma_2$ are not contained in $\partial T$.
Choose a simple closed curve $\delta \subset T$ so that
$\delta \cap \gamma_j =\{ z_j \},j=1,2$ where $z_j$ is a point.
Note that $\delta$ is non-trivial in $T$.
Consider the annulus $Z =p_T^{-1}(\delta)$.
Without loss of generality, we can assume that $Z$ and $X \cup Y$ are in general position.
Then, the intersection $Z \cap (X \cup Y)$ consists of circles,
which are
either trivial (bounding a disk) in the torus $X \cup Y$ or not.

All trivial circles can be eliminated one by one by an isotopy of the annulus $Z$ using the following standard procedure.
Consider an innermost (in the torus) circle $\alpha$.
It bounds a disk $D \subset (X \cup Y)$ so that
$D \cap Z =\alpha$.
The situation when $\alpha$ does not bound a disk in $Z$ is impossible
because, in this case, $\alpha$ is at the same time 
both trivial (because $\alpha =\partial D$)
and non-contractible (because it is isotopic to the non-trivial circle $\delta$).
Hence, $\alpha$ bounds a disk $D' \subset Z$.
The manifold $M$ is irreducible,
thus the sphere $D \cup D'$ bounds a ball in $M$.
This implies that there is an isotopy fixing $\alpha$ that takes $D'$ to $D$.
Then, we move $D'$ a bit further, and, as a result, the number of trivial circles in the intersection $Z \cap (X \cup Y)$ decreases by at least one.
After repeating this procedure (if needed), the intersection contains only circles that are non-trivial in the torus.

By construction, the intersection $Z \cap Y$ consists of only the arc $[z_1,z_2]$,
hence $Z \cap (X \cup Y)$ contains exactly one non-trivial circle.
The circle is composed of two arcs:
the first arc connects $z_1$ and $z_2$ in $X$, and the other one connects the same points in $Y$.
Since $X \cup Y$ bounds a submanifold,
the union of these arcs bounds a disk $D \subset Z$.
The disk is a compressing disk for $X \cup Y$.
As a result of the compression, we obtain a sphere.
 Since $M$ is irreducible, the sphere bounds a ball.
Hence, $\hat{XY}$ is a solid torus,
which can be represented as the direct product $D \times S^1$
so that $\gamma_1,\gamma_2$ are isotopic to a fiber,
and we can construct the desired isotopy using this structure.

{\bf (b)}.
Assume now that  $\gamma_j,j=1,2,$ separates the surface $T$.

{\bf (b1)}.
Let $\partial Y \cap \partial T =\emptyset$.
In this case, we need a curve $\delta \subset T$ having the following properties:
\begin{itemize}
\item $\delta$ intersects each of $\gamma_1, \gamma_2$ in exactly $2$ points:
$\delta \cap \gamma_1 =\{ z_1, z_3 \}, \delta \cap \gamma_2 =\{ z_2, z_4 \}$.
Let the points $z_1,\ldots,z_4$ be numbered so that, going along $\delta$ we meet  them in the order
$z_2,z_1,z_3,z_4$.
\item No part, into which $\delta \cup \gamma_1$ (or, equivalently, $\delta \cup \gamma_2$)
cuts the surface $T$, is homeomorphic to a disk.
\end{itemize}
Such a curve exists because, under our assumptions,  the curves $\gamma_j$ do not bound a disk in $T$
(see Remark~\ref{remark:gamma is not trivial}).

Assume, without loss of generality, that $X$ lies above $T$, i.e., $X \subset M_T^{+}$,
and let 
the annulus $Z =p_T^{-1}(\delta) \cap M_T^{+}$.
 Note that the curve $\delta$ is one of the boundary circles of the annulus.
Again, consider the intersection $Z \cap (X \cup Y)$
and, as above, eliminate all trivial circles in the intersection using an appropriate isotopy of the annulus $Z$.
As a result, the intersection consists of four arcs connecting the points $z_1,z_2,z_3,z_4$.
These are $a_{12} =a_{12}[z_1,z_2] \subset Y, a_{34} =a_{34}[z_3,z_4] \subset Y$
and two arcs lying in $X$.
If the latter are $b_{12} =b_{12}[z_1,z_2]$ and $b_{34} =b_{34}[z_3,z_4]$,
then we have a situation analogous to the one in the above case,
hence the desired isotopy exists.

The variant when the arcs are $b_{14} =b_{14}[z_1,z_4] \subset X$ and $b_{23} =b_{23}[z_2,z_3] \subset X$
is impossible because, in this case, these arcs  necessarily intersect each other inside $z$
(recall that going along $\delta$ we meet the points in the order $z_2,z_1,z_3,z_4$).

We check that the remaining option $b_{13} =b_{13}[z_1,z_3], b_{24} =b_{24}[z_2,z_4] \subset X$ is impossible, too.
Indeed, at least one of these arcs (say, $b_{13}$) cuts off  $Z$ a half-disk $D_1 \subset Z$ that does not contain the other arc.
Then $\partial D_1 =b_{13} \cup a_{13}$ where $a_{13} \subset \delta$.
Analogously, there is a disk $D_2 \subset X$
bounded by the circle $b_{13} \cup c_{13}$,
where $c_{13} =c_{13}[z_1,z_3] \subset \gamma_1$
(the points $z_1,z_3$ divide $\gamma_1$ into two arcs,
here we mean the arc that bounds a disk in $X$ together with $b_{13}$).
Hence there is a disk $D =D_1 \cup D_2$ so that
$\partial D =a_{13} \cup c_{13} \subset T$.
Since the surface $T$ is incompressible
its part bounded by $\partial D$ is homeomorphic to a disk.
Therefore, we obtained a contradiction because
  the part is one of the four parts into which $\delta \cup \gamma_1$ divides the surface $T$,
and we have chose the curve $\delta$ so that none of these four parts is homeomorphic to a disk.

{\bf (b2)}.
Assume now that $\partial Y \cap \partial T \not=\emptyset$.
The case $\partial Y \subset \partial T$ is impossible under our assumptions,
because  this would imply $T =Y$.
 Hence, a boundary circle of $Y$ (say, $\gamma_2$) is contained in $\partial T$,
while the other one is not.
The proof in this situation is analogous to the one in case (b1)
with the only difference being that $\delta$ is not a circle but an arc,
so that
\begin{itemize}
\item $\delta \cap \gamma_2 =\{ z_2,z_4\}$ (these are the endpoints of $\delta$) and $\delta \cap \gamma_1 =\{ z_1, z_3 \}$,
\item $\delta$ is not boundary parallel.
\end{itemize}
Such an arc exists because, otherwise, $T$ would be  a disk.

\emph{There are $X$ and $Y$ as above, so that $X \in \SByT$ and $Y \in \TByS$.}

Let $X_1 \in \SByT$ and $Y_1 \subset T$
be renamed annuli as in the above discussion.
If $Y_1 \in \TByS$ we have the desired situation.
Assume $Y_1 \not\in \TByS$. This means $\inter Y_1 \cap S \not=\emptyset$.
The intersection $\inter Y_1 \cap S$ consists of circles, which are non-trivial in $Y_1$
(recall that we assume that $S \cap T$ does not contain trivial circles).
Then $S \cap \inter \hat{X_1Y_1} \not=\emptyset$,
and each connected component of the intersection is an annulus whose boundary circles lie in $\inter Y_1$.
To see this, it is sufficient to apply arguments analogous to those we used above,
when we explained, why if $X$ is an annulus, then $Y$ is an annulus, too.
Now, in these arguments one should replace $X$ with the annulus lying inside $Y$ that is bounded by the boundary circles of the connected component of $S \cap \hat{X_1Y_1}$.
Choose any of the annuli in
$S \cap \inter \hat{X_1Y_1}$
 and denote it by $X_2$.
Then $X_2 \in \SByT$, and we can apply to $X_2$ the above arguments.
As a result, we have a pair of annuli $X_2 \in \SByT$ and $Y_2 \subset \inter Y_1$, analogous to the pair $X_1,Y_1$.
If $Y_2 \in \TByS$, we obtain the desired situation,
otherwise, we can choose the third pair of annuli, and so on.
The process is finite because  each time $Y_{i+1} \subset \inter Y_i$,
and the number of circles in $S \cap T$ is finite.

\begin{center}
{\sc Case 3:} $\SByT$ contains neither disks nor annuli.
\end{center}

Note that in this case (see Remarks~\ref{remark:gamma is not trivial} and~\ref{remark:AnnuliInBothCollections})
 $\TByS$ contains neither disks nor annuli, too.

Let $M_Y =p_T^{-1}(Y)$.
 Recall that by $p_T$ we denote  the projection  map $p_T: M \to T$
coming from a representation $M =T \times I$ where $I$ is a segment.

\emph{If $\SByT$ contains neither disks nor annuli,
then any $X \in \SByT$ can be isotoped
so that $X \subset M_Y$ and $X \cap \partial M_Y =\partial X$.
}

Fix $X \in \SByT$.
Denote by $A_j$
the annuli 
\[
A_j =p_T^{-1} (\gamma_j), \quad j=1,\ldots,k,
\]
where $\partial X =\gamma_1 \cup \ldots \cup \gamma_k =\partial Y$.

Fix $j, 1 \leq j \leq k$.
If $\gamma_j \subset \partial S$
then $X \cap A_j =\gamma_j$,
i.e., near $A_j$ we already have the desired position of $X$.

Assume $\gamma_j \not\subset \partial S$.
Let $X$ and $A_j$ be in general position and let
a small neighborhood of $\gamma_j$ in $X$ be inside $M_Y$
(if it is not so, this can be achieved by a small isotopy of $X$).
If $A_j \cap X =\gamma_j$, we have the desired position of $X$ near $A_j$.
Otherwise, the intersection consists of pairwise disjoint circles,
which can be of two types:
either trivial (i.e., bounding a disk in $A_j$),
or non-trivial (i.e., isotopic to $\gamma_j$, which is an axial line of $A_j$).

Assume $X \cap A_j$ contains a trivial circle.
Consider an innermost trivial circle $\delta \subset X \cap A_j$,
 and let $D \subset A_j$ be the disk bounded by $\delta$.
Then $D \cap X =\delta$.
The surface $S$ is incompressible,
hence $\delta$ bounds a disk $D' \subset S$.
If $D' \cap T \not=\emptyset$,
then the intersection contains a circle that bounds a disk in $S$,
but we have assumed above that $\SByT$ does not contain disks,
hence the situation is impossible under our assumptions.

Therefore, $D' \cap T =\emptyset$  or, equivalently, $D' \subset X$.
The manifold $M$ is irreducible,
hence the sphere $D \cup D'$ bounds a ball in $M$,
and we can isotope $D'$ to $D$ and then a bit further through $A_j$.
The isotopy of $X$ decreases the number of trivial circles in $X \cap A_j$ by at least $1$.
By repeating the transformation (if needed), we can eliminate all trivial circles in $X \cap A_j$.

Assume $X \cap A_j$ contains a non-trivial circle other than $\gamma_j$.
The number of such circles is necessarily even;
this is because:
\\1. Earlier we had isotoped a small neighborhood of $\gamma_j$ in $X$ to be inside $M_Y$,
hence a small neighborhood  of $\gamma_j$ in $A_j$ lies outside $\hat{XY}$.
\\2. If $X$ lies above (resp. below) $T$, a small neighborhood of the circle $A_j \cap \partial_{+}$ (resp. $A_j \cap \partial_{-}$) lies above (resp. below)$X$,
because $S \cap \partial M =\emptyset$;
hence the small neighborhood of the circle lies outside $\hat{XY}$.
\\3. The union $X \cup Y$ bounds a submanifold in $M$;
hence each circle in $X \cap A_j$ separates $A_j \cap \inter \hat{XY}$ and $A_j \cap \inter (M \setminus \hat{XY})$.
Thus $A_j$ is separated into alternating annuli lying inside and outside $\hat{XY}$.

Therefore, the number is even and $\geq 2$.

The circles are pairwise isotopic in $A_j$,
so any two of them cobound an annulus in $A_j$.
Consider non-trivial circles $c_1,c_2 \subset X \cap A_j$ other than $\gamma_j$,
which cobound an annulus $B \subset A_j$.
The circles $c_1,c_2$ are isotopic in $M$.
Let the corresponding isotopy be denoted by $f$.
Denote by $p_S$ the projection map $p_S: M \to S$
coming from a representation $M =S \times I$ where $I$ is a segment.
Then the composition $p_S \circ f$ is a homotopy taking $c_1$ to $c_2$ in the surface $S$.
Then, by~\cite{Epstein66},
$c_1$ is isotopic to $c_2$ in $S$.
Additionally, $c_1 \cap c_2 =\emptyset$,
hence $c_1,c_2$ cobound an annulus $B' \subset S$.

Let $c_1,c_2$ be so that
$B \cap X =c_1 \cup c_2$.
Such a pair of circles exists because $X \cup Y$ bounds a submanifold in $M$.
If $B' \cap T =\emptyset$,
then we can show that there is an isotopy fixing $\partial B =\partial B'$ that takes $B'$ to $B$.
This fact can be established using  the arguments we used above in case~$2$,
in which the surface $T$ and the annuli $X,Y$ should be replaced with the surface $S$ and the annuli $B',B$, respectively.
Hence we  can eliminate $c_1,c_2$ using an appropriate isotopy of $X$.
This decreases the number of non-trivial circles in $X \cap A_j$ by at least two.

Assume $B' \cap T \not=\emptyset$.
Then the intersection consists of circles
that are either trivial in $S$
or isotopic to the boundary circles of $B'$.
Both of these possibilities contradict our assumptions on $\SByT$,
because the first possibility implies $\SByT$ contains a disk,
and the second one implies $\SByT$ contains an annulus.

Therefore, we can eliminate all non-trivial circles in $X \cap A_j$ other than $\gamma_j$ using an appropriate isotopy of $X$.
As a result, we have $X \cap A_j =\gamma_j$.

Repeating the above procedure for all circles in $\partial X$, we isotope $X$ to a position so that
$X \cap \partial M_Y =\partial X$.
Additionally, we know that $X \cap \inter M_Y \not=\emptyset$
(e.g., points of $X$ lying near $\partial X$ were isotoped to be inside $M_Y$ at the beginning of the above process).
Hence $X \subset M_Y$.

In particular, this implies that $Y$ is connected
because, otherwise, $M_Y$ is disconnected while connected surface $X$ has non-empty intersection with each connected component of $M_Y$.

By construction, $Y \subset M_Y$.
Thus the submanifold $\hat{XY}$ bounded by $X \cup Y$ lies in $M_Y$.

\emph{There is an isotopy taking $X$ to $Y$
fixing their common boundary.
}

We will prove this using the following proposition proved by Waldhausen:

\bigbreak
\noindent {\bf Proposition}
(\cite[COROLLARY 5.5.]{Waldhausen68}).
\emph{
Let $Q$ be a compact connected orientable irreducible $3$-manifold.
Let $F$ and $G$ be incompressible 
orientable surfaces properly embedded in $Q$.
Suppose there is a homotopy from $F$ to $G$, 
which is constant on $\partial F$.
Then, $F$ is isotopic to $G$ by a deformation which 
is constant on $\partial Q$. 
}
\bigbreak

The only thing we need to show to use this statement 
is the existence  of the homotopy from $X$ to $Y$;
all other conditions for $M_Y,X,Y$ in the roles of $Q,F,G$, respectively, are satisfied by construction.
To this end, we consider two  projection maps
coming from a representation $M_Y =Y \times I$ where $I =[-1,1]$:
$p_Y: M_Y \to Y$ and $p_I: M_Y \to I$.
Let a map $h_t(x): M_Y \times [0,1] \to M_Y, t \in [0,1], x \in M_Y$,
be given by
\[
h_t(x) =y
\text{ where }
p_Y(y) =p_Y(x),
p_I(y) =(1 -t) p_I(x)
\]
(we think that $Y$ corresponds to the coordinate $0 \in [-1,1]$).
Since $X \subset M_Y$, the restriction of $h$ to $X$ is a homotopy
taking $X$ to a subset of $Y$.
But the subset coincides with $Y$
because $\partial X =\partial Y$ and $X$ separates $M_Y$.
To see the latter fact, it is sufficient to note that
 for any $z \in Y$, the vertical fiber $z \times I$ intersects $X$ at least once
because $X \cup Y$ bounds a submanifold in $M$.

Hence, by the aforementioned proposition, there is an isotopy taking $X$ to $Y$ fixing in their common boundary.
Extending the isotopy to $T \setminus Y$ by identity,
we obtain the desired isotopy taking $U$ to $T$.

\emph{There are $X$ and $Y$ as above, so that $X \in \SByT$ and $Y \in \TByS$.}

The proof in this case is completely analogous
to the one of the last italicized statement in the case 2,
with the only difference that now we are dealing with $X_i,Y_i$,
which are neither disks nor annuli.

\emph{The surfaces $U$ and $V$ are cutting surfaces for $\ell$.}

In all three of the above cases, we have proved
that the surface $U$ is isotopic to $T$,
which, by Definition~\ref{def:CuttingSurface} of a cutting surface,
is isotopic to a fiber in the direct product $\Sigma \times I$;
hence $U$ is isotopic to a fiber, too.

It remains to check that
$U$ intersects each component of the link $\ell$ exactly once.
This is a consequence of our construction of the surface $U$, that is, $X \cup (T \setminus Y)$,
and the fact, which we established above in all three of the above cases,
that the union $X \cup Y$ bounds a submanifold $\hat{XY} \subset M$
(it is a ball in the first case, a solid torus in the second one, and a handlebody of the genus $\geq 2$ in the third one).
Indeed, consider a component $K$ of the link $\ell$.
Since $X \subset S$ and $S$ is a cutting surface,
$K$ intersects $X$ in at most $1$ point.
The surface $T$ is a cutting surface, hence $K$ intersects $T$ exactly once.
If the intersection point lies in $\inter Y$,
then $K$ comes here into $\hat{XY}$,
and then it necessarily intersects $X$,
because it must exit $\hat{XY}$ to have an endpoint in $\partial M$.
If $K$ intersects $T$ in $T \setminus \inter Y$,
then $K$ does not intersect $\inter X$
because otherwise, it would come their into $\hat{XY}$, and hence necessarily have the second intersection point with $X \cup Y$, where it goes outward $\hat{XY}$.
\end{proof}

\subsubsection{Verification of the condition (MF2)}
\label{sec:mf_2}

Consider edges $e_1,e_2 \in \eGamma, e_1 =\overrightarrow{v_0,v_1}, e_2 =\overrightarrow{v_0,v_2},$
so that $\mu(e_1,e_2) >0$.
We need to show that there is an edge $e_3 =\overrightarrow{v_0,v_3} \in \eGamma$ so that
\begin{equation}
\label{eq:mu e_3}
\mu(e_1,e_3) < \mu(e_1,e_2) \text{ and } \mu(e_2,e_3) < \mu(e_1,e_2).
\end{equation}

Let $S,T$ be cutting surfaces
that determine the edges $e_1,e_2$, respectively,
and so that
\[
|\inter S \cap \inter T| =\mu(e_1,e_2) >0.
\]
In particular, this means that $S$ and $T$ are inequivalent cutting surfaces for the same string link $\ell \in v_0$,
and the surfaces satisfy the conditions of Lemma~\ref{lemma:UIsCuttingSurface}.
Hence, there is $X \in \SByT$ so that 
the surface $U$,
which is the rebuilding of $T$ by $X$
(the procedure giving the surface described at the beginning of Section~\ref{sec:TechnicalLemma}),
is a cutting surface for the string link $\ell$.

By construction, $U =(T \setminus Y) \cup X$.
Let $U'$ be the surface obtained as a result of
moving $U$ a bit upward if $X$ lies above $T$ or a bit downward otherwise
(keeping $\partial U$ fixed).
The resulting surface is equivalent to $U$.
Since $X \cap T =\partial X$, we have 
\begin{equation}
\label{eq:U less S}
|\inter U' \cap \inter T| =0 < |\inter S \cap \inter T|.
\end{equation}
By construction,
\begin{equation}
\label{eq:U less T}
|\inter U' \cap \inter S| \leq |\inter S \cap \inter T| -k,
\end{equation}
 where $k =|\partial X\cap \inter S| >0$
because all the circles in $\partial X$ that are not in $\partial S$ disappear as a result of rebuilding
and $\partial X \not\subset \partial S$.
The number $k$ is greater than $0$ because $X \not=S$.

Therefore, if $U$ is a non-trivial cutting surface then it determines an edge $e_3 \in \eGamma$, and the edge is what we need
(cf.~\ref{eq:U less S},\ref{eq:U less T} and~\ref{eq:mu e_3}).
Hence to complete the proof of Theorem~\ref{theorem:Main},
it is sufficient to show that in the situation under consideration,
there is a rebuilding that gives a non-trivial  cutting surface.
The fact is a consequence of the following lemma.

\begin{lemma}
\label{lemma:if U is trivial}
Let $S$ and $T$ be cutting surfaces for an $n$-strand string link $\ell$
having the following properties:
\begin{itemize}
\item[\emph{(a)}] $S$ and $T$ are non-trivial cutting surfaces for $\ell$,
\item[\emph{(b)}] $|\inter S \cap \inter T| > 0$,
\item[\emph{(c)}] all cutting surfaces that can be obtained as a result of rebuilding one of the surfaces by the other
(both $S$ by $T$ and $T$ by $S$)
are trivial cutting surfaces for the link $\ell$.
\end{itemize}
Then the (unordered) pair of string links obtained  as the result of cutting $\ell$ by $S$ 
coincide up to braid equivalence
with the pair obtained as a result of cutting $\ell$ by $T$.
\end{lemma}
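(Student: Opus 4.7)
I begin by invoking Lemma~\ref{lemma:UIsCuttingSurface} to produce $X \in \SByT$ and $Y \in \TByS$ with the listed properties; the rebuilds $U = (T \setminus Y) \cup X$ and $V = (S \setminus X) \cup Y$ are cutting surfaces for $\ell$, and both are trivial by hypothesis (c). Relabeling if necessary, I may assume $X \subset M_T^+$ and $\inter \hat{XY} \subset M_S^-$; the other configurations are entirely symmetric. Under these conventions, $M_U^- = M_T^- \cup \hat{XY}$, $M_U^+ = M_T^+ \setminus \hat{XY}$, $M_V^- = M_S^- \setminus \hat{XY}$, $M_V^+ = M_S^+ \cup \hat{XY}$. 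I also introduce the four-region decomposition $M \setminus (S \cup T) = R_1 \sqcup R_2 \sqcup R_3 \sqcup R_4$ with $R_1 = M_T^- \cap M_S^-$ (containing $\partial_-$), $R_2 = M_T^- \cap M_S^+$ (the ``pocket''), $R_3 = M_T^+ \cap M_S^- = \hat{XY}$ (the ``bubble''), $R_4 = M_T^+ \cap M_S^+$ (containing $\partial_+$), and write $P = \ell \cap R_3$ and $Q = \ell \cap R_2$.

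Next I determine which side of each trivial rebuild is braid-equivalent to $\unit$. The cutting surface $T$ lies inside $M_U^-$ and is a cutting surface there for the free string link $\ell \cap M_U^-$, cutting it into $\ell \cap M_T^-$ below and $P$ above. If $\ell \cap M_U^- \simeq \unit$, then Lemma~\ref{lemma:FC1} (``a braid cannot be cut into two non-braid pieces'') applied inside $M_U^-$ forces $\ell \cap M_T^- \simeq \unit$, contradicting the non-triviality of $T$ from hypothesis (a). Hence $\ell \cap M_U^+ = \ell \cap R_4 \simeq \unit$. The symmetric argument, using $S$ as a cutting surface for $\ell \cap M_V^+$ inside $M_V^+$, yields $\ell \cap M_V^- = \ell \cap R_1 \simeq \unit$.

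Finally I combine these observations. Since $\ell \cap R_1 \simeq \unit$ and $\ell \cap R_4 \simeq \unit$, up to braid-equivalence all the nontrivial content of $\ell$ lies in $R_2$ and $R_3$, that is, in $Q$ and $P$. Set-theoretically, cutting $\ell$ by $T$ yields $\ell \cap M_T^- = (\ell \cap R_1) \cup Q$ and $\ell \cap M_T^+ = P \cup (\ell \cap R_4)$, while cutting by $S$ yields $\ell \cap M_S^- = (\ell \cap R_1) \cup P$ and $\ell \cap M_S^+ = Q \cup (\ell \cap R_4)$. Absorbing the trivial braid portions in $R_1$ and $R_4$, each of these four pieces is braid-equivalent to either $P$ or $Q$ (viewed as a free string link in $\Sigma \times I$, extended trivially by vertical strands for those strands of $\ell$ that do not enter the corresponding bubble or pocket). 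Explicitly, $\ell \cap M_T^- \simeq Q \simeq \ell \cap M_S^+$ and $\ell \cap M_T^+ \simeq P \simeq \ell \cap M_S^-$. Hence $\{\ell \cap M_S^-, \ell \cap M_S^+\} = \{\ell \cap M_T^-, \ell \cap M_T^+\}$ up to braid-equivalence, as claimed; note that the matching is the ``swapped'' one.

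The main technical obstacle I anticipate is the last ``absorption'' step. The regions $R_2$ and $R_3$ meet only along the intersection circles of $S \cap T$, so one cannot simply treat $R_2 \cup R_3$ as a genuine sub-cylinder of $M$ containing both $P$ and $Q$. To deduce the identifications $\ell \cap M_T^- \simeq \ell \cap M_S^+$ and $\ell \cap M_T^+ \simeq \ell \cap M_S^-$ from the triviality of $\ell \cap R_1$ and $\ell \cap R_4$, I plan to perturb $V$ slightly downward and $U$ slightly upward to obtain disjoint cutting surfaces equivalent to $V$ and $U$ respectively, so that $R_1$ and $R_4$ become honest sub-cylinders whose induced trivial product structures can be used to slide the braid portions across $S \cap T$ and identify the resulting pieces. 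The case in which $S \cap T$ has several components requires no new ideas: hypothesis (c) supplies the triviality of the rebuilds corresponding to the single pair $(X, Y)$ furnished by Lemma~\ref{lemma:UIsCuttingSurface}, and the above argument applies verbatim once the four-region decomposition is replaced by its analogue for multiple intersection circles.
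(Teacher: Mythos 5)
Your overall skeleton is the same as the paper's: invoke Lemma~\ref{lemma:UIsCuttingSurface} to get $X,Y,U,V$, use hypothesis (c) together with the non-triviality of $S$ and $T$ to decide which side of each rebuilt surface is the braid side (your argument via Lemma~\ref{lemma:FC1} is fine and is essentially the paper's, phrased a bit more cleanly), and conclude that the two cuttings give the same unordered pair with the ``swapped'' matching. The genuine gap is in the final identification step. Your four-region decomposition silently assumes the simplest intersection pattern: you set $R_3=M_T^{+}\cap M_S^{-}=\hat{XY}$ and treat $R_2=M_T^{-}\cap M_S^{+}$ as a single pocket, which amounts to assuming $S\cap T=\partial X=\partial Y$. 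In general $S\cap T$ has further circles, $M_T^{+}\cap M_S^{-}$ has components other than $\hat{XY}$, and then your key inputs are misstated: what your braid-side argument actually proves is $\ell\cap M_U^{+}\simeq\unit$ and $\ell\cap M_V^{-}\simeq\unit$, where $M_U^{+}=M_T^{+}\setminus\inter\hat{XY}\supsetneq R_4$ and $M_V^{-}=M_S^{-}\setminus\inter\hat{XY}\supsetneq R_1$, not $\ell\cap R_4\simeq\unit$ and $\ell\cap R_1\simeq\unit$. Moreover your planned fix (``perturb $V$ slightly downward and $U$ slightly upward'' so that $R_1$ and $R_4$ become product regions under honest cutting surfaces) breaks in exactly this situation: $V=(S\setminus X)\cup Y$ contains the other pieces of $S$ lying above $T$, so no small perturbation places $V$ in $M_T^{-}$, and $R_1$, $R_4$ are not regions cut off by cutting surfaces at all. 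So the closing claim that the argument ``applies verbatim'' for several intersection circles is not justified, and the identification $\ell\cap M_T^{-}\simeq Q\simeq\ell\cap M_S^{+}$ is exactly the point that still needs a proof.

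The paper avoids this by never using the complementary regions $R_1,R_2,R_4$: it splits the strands of $\ell$ into $\ell_1$ (those meeting $\inter\hat{XY}$, which necessarily cross $T$ in $Y$ and $S$ in $X$) and $\ell_2$ (the rest), observes that the relevant sub-collections ($\ell_{1a}$ and the part of $\ell_2$ above $T$, as subsets of the braid above $U$; $\ell_{1b}$ and the part of $\ell_2$ below $S$, as subsets of the braid below $V$) are braids, and then identifies both cuttings with the common pair $\{\ell_1\amalg\unit,\ \ell_2\amalg\unit\}$ built from complete strands of $\ell$. That bookkeeping is insensitive to how many circles $\inter S\cap\inter T$ contains, which is precisely what your $P$/$Q$ bookkeeping is not. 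To repair your write-up you would either have to first reduce to the case $S\cap T=\partial X$ (which the hypotheses do not allow you to assume) or replace $P$ and $Q$ by strand-based models as in the paper.
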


We will prove the lemma below.
Let us now finish the proof of Theorem~\ref{theorem:Main}.
If there is $X \in \SByT$ or $Y \in \TByS$ so that
the corresponding rebuilding is a non-trivial cutting surface,
then, as we  have shown before the lemma,  the rebuilding determines the desired edge $e_3 \in \eGamma$.
Otherwise, if all rebuildings are trivial cutting surfaces,
then, by the lemma, cuttings by $S$ and $T$ give equivalent
(in the sense  of the equivalence relation $\cong$ defined in Section~\ref{sec:GraphGamma})
pairs of string links.
But this implies
that $v_1 \cong v_2$.
Therefore, we reach a contradiction with  the condition $e_1 \not=e_2$,
because, by the definition of the  graph $\Gamma$ (Section~\ref{sec:GraphGamma}),
edges having coinciding endpoints coincide.

\emph{Proof of Lemma~\ref{lemma:if U is trivial}.}
The surfaces $S$ and $T$ satisfy the conditions of 
Lemma~\ref{lemma:UIsCuttingSurface}.
Let $X \in \SByT$, $Y \in \TByS$, and
cutting surfaces $U$ and $V$
be those that exist by the lemma.
Assume, without loss of generality, that $X$ lies above $T$.

By condition~(c), $U$ and $V$ are trivial cutting surfaces.
The surfaces $U$ and $T$ are isotopic,
but the isotopy cannot be $\ell$-admissible
(see Definition~\ref{def:AdmissibleIsotopy})
because otherwise $T$ is equivalent to the trivial cutting surface $U$, and thus is itself trivial, contradicting condition~(a).
Therefore, $\ell_{XY} =\ell \cap \inter \hat{XY}$
after an appropriate isotopy can be viewed as  a non-trivial (free) string link in  $Y \times I$ where $I$ is a segment.
It is necessary to emphasize that $\ell_{XY}$ is defined up to braid equivalence.

Let $C_1,\ldots,C_r,1 \leq r \leq n,$ be strands of the link $\ell$
so that $C_i \cap \inter \hat{XY} \not=\emptyset$ if and only if $1 \leq i \leq r$.
The strands form a sublink of $\ell$,
we denote it by $\ell_1$.
Then $\ell_{XY}$ is the part of $\ell_1$ lying between $X$ and $Y$.
We denote by $\ell_{1a}$ and $\ell_{1b}$
parts of $\ell_1$ lying above $X$ and below $Y$, respectively.
Since $U$ is a trivial cutting surface,
the part of $\ell$ that lies above $U$ is a braid,
and $\ell_{1a}$, which is its subset, is a braid, too.
Hence the part of $\ell$ lying above $T$ 
is (up to braid equivalence) the union
of $\ell_{XY} \# \ell_{1a} \simeq \ell_{XY}$
and  a  braid
$\ell_{2a}$, that is, the upper (lying above $T$) part of $\ell_2$,
where $\ell_2$ is the sublink of $\ell$ formed by 
the strands $C_{r+1},\ldots,C_n$.
Below, we will show that $\ell_2$  is not empty.

By the condition~(c), the surface $V$ is a trivial cutting surface,
hence it cuts off a braid from $\ell$, and the braid lies below $V$
(otherwise, the part of $\ell$ lying above $V$ is a braid,
but this is impossible because the part contains $\ell_{XY}$).
Thus $\ell_{1b}$, which is its subset, is also a braid.

Therefore, $\ell_1$ consists of $3$ parts:
$\ell_{XY}$ (in the middle) and two braids (from below and from above).
Hence $\ell_1 \simeq \ell_{XY}$.
In particular, this implies that $\ell_1$ is not a braid.

By the condition~(a), the surface $T$ cuts $\ell$ into two non-trivial string links.
Hence the part of $\ell$ lying below $T$ is non-trivial.
The part is the union of the braid $\ell_{1b}$
and the part of $\ell_2$ lying below $T$.
The latter is not a braid because otherwise $T$ is trivial cutting surface.
As we have mentioned above, the part of $\ell_2$ lying above $T$ is a braid,
hence $\ell_2$ is a non-empty non-trivial string link,
and the part of $\ell$ lying below $T$
is (up to braid equivalence) $\ell_2 \amalg \ell_{1b}$.
Therefore, $T$ cuts $\ell$ into
\[
\begin{split}
\ell_1 \amalg \ell_{2a} \simeq \ell_1 \amalg \unit_{c_{r+1},\ldots,c_n} =\ell' \quad \text{lying above $T$ and} \\
\ell_2 \amalg \ell_{1b} \simeq \ell_2 \amalg \unit_{c_1,\ldots,c_r} =\ell'' \quad \text{lying below $T$}.
\end{split}
\]

Applying analogous arguments to the surface $S$,
we can show that
up to braid equivalence $S$ cuts $\ell$ into
\[
\begin{split}
\ell_2 \amalg \ell_{1a} \simeq \ell_2 \amalg \unit_{c_1,\ldots,c_r} =\ell'' \quad \text{lying above $S$ and} \\
\ell_1 \amalg \ell_{2b} \simeq \ell_1 \amalg \unit_{c_{r+1},\ldots,c_n} =\ell' \quad \text{lying below $S$}
\end{split}
\]
where $\ell_{2b}$ is the part of $\ell_2$ lying below $S$.

Therefore, in the situation under consideration,
the surfaces $S$ and $T$ 
cut $\ell$ into $\ell'$ and $\ell''$.
The difference between these two cuttings consists in
which part lies above the corresponding cutting surface and which part lies below it,
but this difference does not matter in our context.
\\Lemma~\ref{lemma:if U is trivial}
is proved.
\qed

This completes the proof of Theorem~\ref{theorem:Main}.

\section*{Acknowledgments}
The author is very grateful to the reviewer for careful reading the paper, and for his helpful  comments and suggestions.

This work was supported by the Russian Science Foundation grant~22-11-00299.

\bibliographystyle{plain}
\bibliography{CuttingOfKnotsArxiv}

\begin{thebibliography}{10}

\bibitem{artin47}
Emil Artin.
\newblock Theory of braids.
\newblock {\em Ann. of Math.}, 48(2):101--126, 1947.

\bibitem{Bellingeri2004}
P.~Bellingeri.
\newblock On presentation of surface braids.
\newblock {\em J. Algebra}, 274:543--563, 2004.

\bibitem{BlairBurkeKoytcheff}
Ryan Blair, John Burke, and Robin Koytcheff.
\newblock A prime decomposition theorem for the 2-string link monoid.
\newblock {\em J. Knot Theory Ramif.}, 24(2):24, 2015.

\bibitem{Cheng17}
Zhiyun Cheng.
\newblock Bowling ball representation of virtual string links.
\newblock {\em J. Knot Theory Ramif.}, 26(6):1742001 (24 pages), 2017.

\bibitem{Chrisman13}
Micah~W. Chrisman.
\newblock Prime decomposition and non-commutativity in the monoid of long
  virtual knots.
\newblock {\em preprint: arXiv:math.GT/1311.5748}, 2013.

\bibitem{DuzhinKarev07}
S.~V. Duzhin and M.~V. Karev.
\newblock Determination of the orientation of string links using finite-type
  invariants.
\newblock {\em Funct. Anal. Appl.}, 41:208--216, 2007.

\bibitem{Epstein66}
D.~B.~A. Epstein.
\newblock Curves on $2$-manifolds and isotopies.
\newblock {\em Acta Math.}, 115:83--107, 1966.

\bibitem{FreedmanFreedman}
Benedict Freedman and Michael~H. Freedman.
\newblock Kneser-{H}aken finiteness for bounded {$3$}-manifolds locally free
  groups, and cyclic covers.
\newblock {\em Topology}, 37(1):133--147, 1998.

\bibitem{Gaudreau20}
Robin Gaudreau.
\newblock Classification of virtual string links up to cobordism.
\newblock {\em Ars Math. Contemp.}, 19(1):37--49, 2020.

\bibitem{Gonzalez2001}
J.~Gonz\'{a}lez-Meneses.
\newblock New presentations of surface braid groups.
\newblock {\em J. Knot Theory Ramif.}, 10:431--451, 2001.

\bibitem{HabeggerLin90}
Nathan Habegger and Xiao-Song Lin.
\newblock The classification of links up to link-homotopy.
\newblock {\em J. Amer. Math. Soc.}, 3(2):389--419, 1990.

\bibitem{HabeggerLin98}
Nathan Habegger and Xiao-Song Lin.
\newblock On link concordance and milnor's $\overline \mu$ invariants.
\newblock {\em Bull. London Math. Soc.}, 30:419--428, 1998.

\bibitem{HabeggerMeilhan08}
Nathan Habegger and Jean-Baptiste Meilhan.
\newblock On the classification of links up to finite type.
\newblock {\em in Topology and Physics, Proceedings of the International
  conference in memory of X.S. Lin}, pages 138--150, 2008.

\bibitem{Krebes}
David Krebes.
\newblock Units of the string link monoids.
\newblock {\em arXiv:1304.4684}, 2013.

\bibitem{MatveevRoots2012}
S.~V. Matveev.
\newblock Roots and decompositions of threedimensional topological objects.
\newblock {\em Russian Mathematical Surveys}, 67(3):459--507, 2012.

\bibitem{MeilhanYasuhara14}
Jean-Baptiste Meilhan and Akira Yasuhara.
\newblock Abelian quotients of the string link monoid.
\newblock {\em Algebr. Geom. Topol.}, 14:1461--1488, 2014.

\bibitem{Milnor54}
John Milnor.
\newblock Link groups.
\newblock {\em Ann. of Math.}, 59(2):177--195, 1954.

\bibitem{Newman42}
M.~H.~A. Newman.
\newblock On theories with a combinatorial definition of `equivalence'.
\newblock {\em Ann. of Math. (2)}, 43(2):223--243, 1942.

\bibitem{Scott70}
G.~P. Scott.
\newblock Braids groups and the group of homeomorphisms of a surface.
\newblock {\em Proc. Camb. Phil. Soc.}, 68:605--617, 1970.

\bibitem{Turaev2012}
Vladimir Turaev.
\newblock Knotoids.
\newblock {\em Osaka J. Math.}, 49(1):195--223, 2012.

\bibitem{Waldhausen68}
Friedhelm Waldhausen.
\newblock On irreducible {$3$}-manifolds which are sufficiently large.
\newblock {\em Ann. of Math. (2)}, 87:56--88, 1968.

\end{thebibliography}

%\cite{Schubert49}
%\cite{Artin47}
%\cite{Milnor54}
%\cite{HabeggerLin90}
%\cite{BlairBurkeKoytcheff}
%\cite{Krebes}
%\cite{FreedmanFreedman}
%\cite{Epstein66}
%\cite{Waldhausen68}
%\cite{HabeggerLin}
%\cite{MatveevRoots2012}
%\cite{Newman42}
%\cite{Scott70}
%\cite{Bellingeri2004}
%\cite{Gonzalez2001}
\end{document}